\theoremstyle{plain}
\newtheorem{theorem}{Theorem}
\newtheorem*{theorem*}{Theorem}
\newtheorem{corollary}{Corollary}
\newtheorem{lemma}{Lemma}
\newtheorem{proposition}{Proposition}
\theoremstyle{definition}
\begin{document}
\title{Epsilon multiplicity for graded algebras}
\author{Suprajo Das}
\address{Suprajo Das, Department of Mathematics, University of Missouri, Columbia, MO 65211, USA}
\email{sdpw6@mail.missouri.edu}
\begin{abstract}
  The notion of $\varepsilon$-multiplicity was originally defined by Ulrich and Validashti as a limsup and they used it to detect integral dependence of modules. It is important to know if it can be realized as a limit. In this article we show that the relative epsilon multiplicity of reduced standard graded algebras over an excellent local ring exists as a limit. We also obtain some important special cases of Cutkosky's results concerning $\varepsilon$-multiplicity, as corollaries of our main theorem.
\end{abstract}
\maketitle
\section{Introduction}
The purpose of this paper is to prove a very general theorem on the $\varepsilon$-multiplicity of graded algebras over a local ring. The idea of $\varepsilon$-multiplicity originates in the works of Kleiman, Ulrich and Validashti. We now recall their definition of relative $\varepsilon$-multiplicity as introduced in \cite{BJ2} and some related notions.
\par Fix a Noetherian local ring $R$ with maximal ideal $m_R$. We say $T = \bigoplus\limits_{n\geq 0}T_n$ is a standard graded $R$-algebra if $T$ is a graded $R$-algebra with $T_0 = R$ which is generated by finitely many homogeneous elements in $T_1$. Now let $$A = \bigoplus\limits_{n=0}^{\infty}A_n \subset B = \bigoplus\limits_{n=0}^{\infty}B_n$$ be a graded inclusion of standard graded $R$-algebras. Then the \emph{relative $\varepsilon$-multiplicity of $A$ and $B$} is defined to be 
\begin{align*}
    \varepsilon\left(A\mid B\right) &:= \limsup\limits_{n}\dfrac{(\dim B - 1)!}{n^{\dim B - 1}}l_R\left(H^0_{m_R}\left(\faktor{B_n}{A_n}\right)\right)\\
    &= \limsup\limits_{n}\dfrac{(\dim B - 1)!}{n^{\dim B - 1}}l_R\left(\faktor{(A_n\colon_{B_n}m_R^{\infty})}{A_n}\right)
\end{align*}
 The above equality comes from the fact that there are natural $R$-module isomorphisms $$H^0_{m_R}\left(\faktor{B_n}{A_n}\right) \cong \faktor{(A_n\colon_{B_n}m_R^{\infty})}{A_n}$$ for all $n\geq 0$. In \cite{BJ2} it is proven that this invariant is finite under very mild conditions, in particular see Lemma \ref{bound2}. In the same article, the authors use $\varepsilon$-multiplicity to give a `Rees Criteria' for integral dependence of modules or of standard graded algebras. The question of whether $\varepsilon$-multiplicity actually exists as a limit has already been considered by many mathematicians. Some related papers in this direction are \cite{DC1}, \cite{DC3}, \cite{DC4}, \cite{DC5}, \cite{BJ2} and \cite{J}. In \cite{DC1}, Cutkosky has established the following result:
\begin{theorem*}[\cite{DC1}, Theorem $3.2$]
Let $R$ be an analytically unramified local ring of dimension $d$ with maximal ideal $m_R$ and $E$ is a rank $e$ submodule of a finite free $R$-module $F=R^n$. Let $B = R[F]$ be the symmetric algebra of $F$ over $R$, which is isomorphic to the standard graded polynomial ring $$B = R[x_1,\ldots,x_n] = \bigoplus\limits_{k=0}^{\infty} F^k$$ over $R$. We may identify $E$ with a submodule $E^1$ of $B_1$ and let $$A = R[E] = \bigoplus\limits_{k=0}^{\infty}E^k$$ be the graded $R$-subalgebra of $B$ generated by $E^1$ over $R$. Then $$\varepsilon\left(A\mid B\right) = \lim\limits_{k\to\infty}\dfrac{(d+n-1)!}{k^{d+n-1}}l_R\left(\faktor{(E^k\colon_{F^k}m_R^{\infty})}{E^k}\right)$$ exists as a finite limit.
\end{theorem*}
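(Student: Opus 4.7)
The strategy is to reduce to the case of a complete local domain and then encode the $R$-length as a lattice-point count under a suitable valuation, from which the limit emerges via Okounkov body techniques.

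First, since completion is faithfully flat over $R$,
\[
\ell_R(H^0_{m_R}(F^k/E^k)) = \ell_{\hat R}\bigl(H^0_{m_{\hat R}}((F^k/E^k)\otimes_R \hat R)\bigr),
\]
and since $R$ is analytically unramified, $\hat R$ is reduced. Let $P_1,\ldots,P_s$ be the minimal primes of $\hat R$. A prime filtration of $\hat R$ combined with additivity of length in short exact sequences writes each length above as a sum of contributions coming from the complete local domains $\hat R/P_i$; only the $P_i$ with $\dim \hat R/P_i = d$ affect the leading term of order $k^{d+n-1}$. This reduces the problem to the case where $R$ is a complete local domain of dimension $d$.

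Second, with $R$ a complete local domain, I would build a rank-$(d+n)$ valuation $\nu$ on $B$ by combining a rank-$d$ valuation $\nu_R$ dominating $R$ (chosen so that the graded quotients of $R$ associated to $\nu_R$ each have length $1$ over the residue field $k=R/m_R$) with a monomial (say lexicographic) valuation on $B/m_R B = k[x_1,\ldots,x_n]$. Define the semigroups
\[
\Gamma_B = \{(k,\nu(f)) \mid 0 \neq f \in B_k\},\qquad \Gamma_A,\ \Gamma_{A^{\mathrm{sat}}}\subset \Gamma_B,
\]
where $\Gamma_A$ and $\Gamma_{A^{\mathrm{sat}}}$ are the sub-semigroups coming from $A_k = E^k$ and $A_k^{\mathrm{sat}} = (E^k \colon_{F^k} m_R^{\infty})$ respectively. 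The choice of $\nu_R$ ensures that
\[
\ell_R(A_k^{\mathrm{sat}}/A_k) = \#\bigl((\Gamma_{A^{\mathrm{sat}}}\setminus \Gamma_A)\cap \pi^{-1}(k)\bigr),
\]
where $\pi:\mathbb{Z}^{d+n}\to\mathbb{Z}$ is the projection onto the grading coordinate.

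Third, I would verify that $\Gamma_B$, $\Gamma_A$, and $\Gamma_{A^{\mathrm{sat}}}$ satisfy the hypotheses of the Kaveh--Khovanskii lattice-point theorem (they lie in a strongly convex full-dimensional rational cone, generate it as groups, and have controlled boundary growth coming from the standard graded structure of $B$). The theorem then yields that each normalized counting function $\frac{(d+n-1)!}{k^{d+n-1}}\#(\Gamma\cap \pi^{-1}(k))$ converges to the $(d+n-1)$-dimensional Euclidean volume of an associated Okounkov body $\Delta\subset \mathbb{R}^{d+n-1}$, so subtracting gives
\[
\varepsilon(A\mid B) = (d+n-1)!\bigl(\mathrm{vol}(\Delta_{A^{\mathrm{sat}}})-\mathrm{vol}(\Delta_A)\bigr)
\]
as a finite limit. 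The principal difficulty is twofold: matching \emph{$R$-length} (rather than residue-field dimension) with lattice-point counts forces a careful filtration of $R$ whose graded pieces are length-one, and verifying the Kaveh--Khovanskii hypotheses for $\Gamma_{A^{\mathrm{sat}}}$ is subtle since the saturation is defined by a non-constructive colon operation and is not manifestly a tame semigroup. One typically handles the latter by sandwiching $\Gamma_{A^{\mathrm{sat}}}$ between two semigroups with known asymptotic behavior and passing to the limit by a squeeze.
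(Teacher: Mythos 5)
Your high-level plan (reduce to a complete local domain, attach a $\mathbb{Z}^{d+n}$-valued valuation to the graded algebra, count semigroup points, and invoke an Okounkov-body limit theorem) is indeed the strategy Cutkosky used for this theorem, and which the paper generalizes in Theorems~\ref{main}--\ref{maintheorem}. However, the two steps you yourself flag as ``the principal difficulty'' are exactly where the proposal as written has genuine gaps, and the fixes are not the ones you suggest.

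The valuation construction is not correct as stated. You want a rank-$d$ valuation $\nu_R$ dominating $R$ whose associated graded pieces over $R/m_R$ are all one-dimensional. For an arbitrary (even complete) local domain this is far too strong: it would force $\mathrm{gr}_{\nu_R}(R)$ to be a polynomial ring over $R/m_R$, which fails for most singular $R$. What actually works (both in Cutkosky's proof and in Theorem~\ref{main} of the paper) is to construct the valuation on $B$ directly, not on $R$ and $B/m_RB$ separately: one passes to the normalization of the blow-up of the closed point of $\mathrm{Spec}(B_{m_B})$, picks a \emph{regular} closed point $x$ on the exceptional fiber (possible since the singular locus has codimension $\ge 2$), and uses a regular system of parameters of $\mathcal{O}_{X,x}$ together with $\mathbb{Q}$-independent weights. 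One also needs $B_{m_B}$ analytically irreducible (for the comparison $K_{\alpha n}\cap B_{m_B}\subset m_B^n B_{m_B}$ of \cite[Lemma 4.3]{DC2}). Crucially, the residue field of this $x$ is only a \emph{finite} extension of $R/m_R$, of some degree $l$, so $R$-lengths are not directly lattice-point counts. The correct replacement for your ``careful filtration of $R$ with length-one pieces'' is the stratification of the count by the integer $t\in\{1,\dots,l\}$ measuring $\dim_{R/m_R}\bigl((A_n\cap K_\lambda)/(A_n\cap K_\lambda^+)\bigr)$, producing the semigroups $\Gamma^{(t)}$ and the identity $l_R(A_n/K_{\beta n}\cap A_n)=\sum_{t=1}^{l}\#\Gamma_n^{(t)}$.

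The treatment of $\Gamma_{A^{\mathrm{sat}}}$ is the genuine obstruction, and ``sandwich and squeeze'' is not a proof. The saturated algebra $A' = \bigoplus_k (E^k\colon_{F^k}m_R^\infty)$ is a graded subalgebra of $B$ but is not standard graded, so there is no a priori reason its value semigroup is tame. The concrete mechanism (in the paper's Theorem~\ref{maintheorem}, and in Cutkosky) is Swanson's linear-growth theorem \cite[Theorem 3.4]{IS} on primary decompositions of the powers of $I=A_1B$: it produces $c_0$ with $(m_R^{cn}B)\cap A_n = (m_R^{cn}B)\cap (A_n\colon_{B_n}m_R^\infty)$ for all $n\ge 1$ and $c\ge c_0$. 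This turns $l_R\bigl((A_n\colon_{B_n}m_R^\infty)/A_n\bigr)$ into a difference of two truncated length functions $l_R\bigl(M_n/(m_R^{cn}B)\cap M_n\bigr)$ for $M=A'$ and $M=A$, each of which is amenable to the semigroup machinery of the domain case. You should cite (or reprove) this result, rather than appeal to a squeeze; without it, there is no way to control $\Gamma_{A^{\mathrm{sat}}}$.

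Finally, the reduction to the domain case via ``prime filtration of $\hat R$ and additivity'' is heuristically right in this polynomial-ring setting, but it glosses over an Artin--Rees comparison between $m_R^n B$ and $B\cap m_R^n C$ for $C=\prod_i B/P_i\hat B$; the filtration inherited from $B$ on the summands $B/P_i\hat B$ is not $m_R$-adic on the nose, and one needs a uniform $k$ with $m_R^nB\subset B\cap m_R^nC\subset m_R^{n-k}B$ to compare the length functions. This is the content of equations (\ref{similar})--(\ref{alpha_2}) in the paper.
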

In our paper, we extend this result by allowing $A$ and $B$ to be reduced standard graded algebras over an excellent local ring $R$. We now state the main theorem from our paper:
\begin{theorem*}[Theorem \ref{maintheorem} and Proposition \ref{Hilb}]
Suppose that $R$ is an excellent local ring with maximal ideal $m_R$, $B =\oplus_{n\geq 0}B_n$ is a reduced standard graded $R$-algebra and $A = \oplus_{n\geq 0}A_n$ is a standard graded $R$-subalgebra of $B$. Also assume that one of the following conditions hold:
\begin{itemize}
    \item[(i)] $R$ is a field or
    \item[(ii)] $P\cap R \neq m_R$ for all minimal primes $P$ of $B$.
\end{itemize}
Then $$\varepsilon\left(A\mid B\right) = \lim\limits_{n\to\infty}\dfrac{(\dim B - 1)!}{n^{\dim B - 1}}l_R\left(\faktor{(A_n\colon_{B_n}m_R^{\infty})}{A_n}\right)$$ exists as a finite limit.
\end{theorem*}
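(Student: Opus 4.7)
The field case (i) reduces to standard Hilbert polynomial theory: when $R$ is a field, $H^0_{m_R}(B_n/A_n) = B_n/A_n$, and Lemma \ref{bound2} forces $\dim(B/A) < \dim B$, so $l_R(B_n/A_n)$ agrees for large $n$ with a polynomial of degree strictly less than $\dim B$, whence the limit exists. Thus the bulk of the work is in case (ii).

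My strategy for (ii) is to reduce the pair $(A,B)$ to a setting in which $B$ is a graded domain and $R$ is a complete local domain, and then to apply Newton--Okounkov body techniques as in \cite{DC1}. Since $B$ is reduced, let $P_1,\ldots,P_s$ denote its minimal primes; then $\bigcap P_i = 0$ and there is a graded short exact sequence
\[
0 \longrightarrow B \longrightarrow \bigoplus_{i=1}^s B/P_i \longrightarrow C \longrightarrow 0
\]
with $\dim C < \dim B$. Inducing the analogous sequence on the quotients $B_n/A_n$ and applying the left-exact functor $H^0_{m_R}$, together with the fact that $l_R(C_n) = o(n^{\dim B - 1})$, reduces the existence of the limit to the case that $B$ is a graded domain. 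Using excellence of $R$, I then pass to $\widehat{R}$ (which is reduced), decompose along its minimal primes by the same additivity argument, and apply faithfully flat descent to reduce further to the case that $R$ is a complete local domain. Condition (ii) survives these reductions and ensures that $\mathrm{Proj}(B) \to \mathrm{Spec}(R)$ has no component collapsing to the closed point, so $(A_n \colon_{B_n} m_R^\infty)/A_n$ acquires the geometric meaning of the $m_R$-torsion of the quotient sheaf supported along the central fiber.

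With these reductions in place, I invoke the finite birational normalization $\widetilde{B}$ of $B$ (finite by excellence) and a resolution of singularities $Y \to \mathrm{Proj}(\widetilde{B})$ along which I choose an admissible flag of subvarieties passing through a smooth point of the central fiber. The associated Parshin-type valuation $\nu$ encodes the graded families $\{A_n\}$ and $\{(A_n \colon_{B_n} m_R^\infty)\}$ as subsemigroups of $\mathbb{Z}^{\dim B}$, and the existence of the limit follows from the Khovanskii--Kaveh volume-convergence theorem applied to the difference semigroup, in the same spirit as Cutkosky's argument for the free-module case stated above.

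The main obstacle I anticipate is the first reduction step: namely, controlling how the saturation operation $(\,\cdot\,\colon m_R^\infty)$ interacts with the embedding $B \hookrightarrow \bigoplus_i B/P_i$. A priori the image in $B/P_i$ of the saturation of $A_n$ in $B$ need not coincide with the saturation of the image of $A_n$, and an Artin--Rees-type estimate on the $m_R$-torsion of $C$ in each graded piece is required to show that the discrepancy is $o(n^{\dim B - 1})$. Only after this bookkeeping is complete can one legitimately replace the original problem by its domain-case analogues, and only then do the Okounkov-body techniques become applicable.
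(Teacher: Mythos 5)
Your high-level outline for case (ii)---reduce to a complete local domain and apply Okounkov-body limits---matches the paper's strategy, and case (i) is as in Proposition \ref{Hilb}. But the proposal has a genuine gap at precisely the step you yourself flag as problematic, and the paper's mechanism for circumventing it is absent from your plan. You place the decomposition of $B$ along its minimal primes before dealing with the saturation, and then observe that the saturation of the image need not equal the image of the saturation, calling for an unspecified Artin--Rees estimate on the $m_R$-torsion of $C$. The paper never faces this issue because it eliminates the saturation first. Setting $I = A_1 B$, it applies Swanson's uniform linear Artin--Rees theorem (\cite[Theorem $3.4$]{IS}) to obtain primary decompositions $I^n = \bigcap_i q_i(n)$ with $(\sqrt{q_i(n)})^{c_0 n}\subset q_i(n)$ for all $n$, and deduces that $(m_R^{cn}B_n)\cap A_n = m_R^{cn}B_n\cap (A_n\colon_{B_n} m_R^\infty)$ for all $c\geq c_0$; this yields a short exact sequence expressing $l_R\left(\faktor{(A_n\colon_{B_n}m_R^\infty)}{A_n}\right)$ as a difference of two $m_R^{cn}$-adic colengths. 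The decomposition along minimal primes (Theorem \ref{main2}) is then carried out entirely in terms of the filtration $n\mapsto (m_R^{cn}B)\cap A_n$, never the saturation, so the interaction you worry about simply does not arise. Without this conversion step your reduction does not go through.

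Two further points. The Okounkov-body construction you invoke---resolution of singularities of $\mathrm{Proj}(\widetilde{B})$ and a Parshin-type flag through the central fiber---is not available for excellent local rings of arbitrary (in particular positive or mixed) characteristic, which is the generality of the theorem. The paper instead normalizes the blowup of the closed point of $\mathrm{Spec}(B_{m_B})$, picks a smooth closed point of the exceptional fiber, and constructs a monomial valuation from a regular system of parameters with $\mathbb{Q}$-linearly independent weights; the decisive input is the Izumi-type linear comparison $K_{\alpha n}\cap B_{m_B}\subset m_B^n B_{m_B}$ from \cite[Lemma $4.3$]{DC2}, which requires analytic irreducibility of $B_{m_B}$---precisely what Lemma \ref{comp2} provides after passage to complete local domains. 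Finally, you assert that condition (ii) ``survives the reductions'' but do not indicate how it is used; in the paper it enters through Lemma \ref{artificial} and the argument following $(\ref{similar})$ to ensure that in each domain quotient either $\bar{A}_1\neq 0$ (so Theorem \ref{main} applies) or the corresponding filtration step vanishes, and it is also needed when contracting minimal primes of $B\otimes_R\hat{R}$ back to $B$. This is a nontrivial point your outline leaves unaddressed.
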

It is well known that the Hilbert-Samuel multiplicity is always an integer however this does not hold true for the epsilon multiplicity. A surprising example given in \cite{DC3}, shows that this limit can even be an irrational number. This shows that the sequence associated to the $\varepsilon$-multiplicity cannot have polynomial growth eventually, unlike the classical Hilbert-Samuel function.

\section{Limits for graded algebras over a local domain}    
We adopt all the notations and conventions about convex geometry from section $6.1.$ of \cite{DC1} and quote two results verbatim.
\begin{theorem}[\cite{DC1}, Theorem $6.1.$]\label{Okounkov1}
Suppose that $S$ is strongly nonnegative. Then $$\lim\limits_{n\to \infty}\dfrac{\#S_{m(S)n}}{n^{q(S)}} = \dfrac{\mathrm{vol}_{q(S)}\Delta(S)}{\mathrm{ind}(S)}.$$
\end{theorem}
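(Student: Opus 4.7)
The plan is to derive this asymptotic by comparing the semigroup $S$ to the closed convex cone $C(S)$ it generates, and then applying a lattice-point counting argument on slices of that cone. Concretely, I would view $S$ as a subset of some $\mathbb{Z}^{q(S)+1}$ equipped with a ``degree'' projection $\pi\colon\mathbb{Z}^{q(S)+1}\to\mathbb{Z}$, so that $S_k = S\cap \pi^{-1}(k)$, and identify $\Delta(S)$ with the slice $C(S)\cap \pi^{-1}(m(S))$ after the standard rescaling. The lattice $L(S)$ generated by $S-S$ meets each level $k\in m(S)\mathbb{Z}$ in a translate of a rank-$q(S)$ sublattice of $\pi^{-1}(0)$, and $\mathrm{ind}(S)$ is its covolume inside the natural slice lattice.

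My first step would be to establish the ``cone-level'' asymptotic: the number of lattice points of $L(S)\cap C(S)$ at level $m(S)n$ equals $\frac{\mathrm{vol}_{q(S)}\Delta(S)}{\mathrm{ind}(S)}\,n^{q(S)} + O(n^{q(S)-1})$. This is a classical Ehrhart-type estimate, since rescaling the level-$m(S)n$ slice by $1/n$ recovers $\Delta(S)$, and the number of $L(S)$-lattice points in the dilation of a bounded convex body of dimension $q(S)$ is controlled by its $q(S)$-dimensional volume divided by the covolume of the slice lattice. My second step would be to compare $S_{m(S)n}$ with $(L(S)\cap C(S))_{m(S)n}$ and show that the symmetric difference has cardinality $o(n^{q(S)})$; combining these two estimates yields the theorem.

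The key obstacle is the second step, the asymptotic saturation of $S$ inside $L(S)\cap C(S)$. The strongly nonnegative hypothesis should be exactly what permits this: it guarantees enough positivity at the boundary of the cone so that the semigroup's failure to fill $L(S)\cap C(S)$ is concentrated in a thin strip. I would invoke a Khovanskii-type lemma stating that for a finitely generated subsemigroup $S'\subset S$ one can find a fixed $s_0\in S'$ with $s_0 + \bigl(L(S')\cap \mathrm{int}\,C(S')\bigr)\subset S'$; then the ``missing'' points all lie in a boundary layer whose slice volume is $o(n^{q(S)})$. Since $S$ itself need not be finitely generated, I would exhaust it by an increasing chain of finitely generated subsemigroups $S'_j\subset S$ with $C(S'_j)\to C(S)$ and $\Delta(S'_j)\to \Delta(S)$, apply the saturation estimate to each $S'_j$, and pass to the limit using the monotone convergence of volumes. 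The technical bulk of the argument lies in this approximation and in verifying that the strongly nonnegative condition is inherited by the approximating $S'_j$, so that the error terms remain uniformly $o(n^{q(S)})$.
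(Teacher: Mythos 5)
You should first note what you are up against: the paper does not prove Theorem \ref{Okounkov1} at all. It is quoted verbatim from \cite{DC1}, Theorem $6.1$, with all of the notation ($m(S)$, $q(S)$, $\Delta(S)$, $\mathrm{ind}(S)$, ``strongly nonnegative'') adopted from Section $6.1$ of \cite{DC1}. So the comparison has to be with Cutkosky's proof there, which does follow the Okounkov--Khovanskii--Kaveh semigroup method you outline: a lattice-point count on slices of the cone $C(S)$ relative to the group $L(S)$, a Khovanskii-type saturation statement for finitely generated subsemigroups, and an exhaustion of $S$ by finitely generated subsemigroups. At the level of skeleton, your plan is the standard and correct one for the classical setting.

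The genuine gap is that you never state, and never use, the actual definition of ``strongly nonnegative''; you treat it as a black box (``should be exactly what permits this'') and in effect replace it by the stronger classical hypothesis under which $\Delta(S)$ is a bounded convex body and $\#\bigl(L(S)\cap C(S)\bigr)_{m(S)n}$ is finite and Ehrhart-controlled. That is precisely what is not given here: the point of the convex-geometric conditions of \cite{DC1}, Section $6.1$, and of the companion Theorem \ref{Okounkov2} (which is how this paper verifies the hypothesis, via the growth bound in Proposition \ref{semiprop}), is to work with semigroups whose closed cone may meet the level-zero hyperplane $\mathbb{R}^d\times\{0\}$ in a nontrivial face, so that boundedness of the slices, finiteness of $\mathrm{vol}_{q(S)}\Delta(S)$, and negligibility of the points of $S$ near that face are exactly the things that must be extracted from the hypothesis rather than assumed. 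Without that, your upper-bound step (comparing $S_{m(S)n}$ with the cone lattice points) is circular or can simply collapse, since the cone count need not even be finite a priori. Note also the asymmetry you gloss over: Khovanskii's lemma applies only to the finitely generated approximations $S'_j$, and the exhaustion argument then yields only $\liminf \geq \mathrm{vol}_{q(S)}\Delta(S)/\mathrm{ind}(S)$ (after checking that $m$, $L$ and $\mathrm{ind}$ of the $S'_j$ stabilize to those of $S$, which you should say); the claimed $o(n^{q(S)})$ bound on the symmetric difference for $S$ itself is not a consequence of saturation but is equivalent to the theorem. Supplying the missing reduction --- from ``strongly nonnegative'' to a truncated, compact-slice situation with controlled boundary contribution --- is the actual content of \cite{DC1}, Theorem $6.1$, beyond the classical Okounkov-body theorem.
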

\begin{theorem}[\cite{DC1}, Theorem $6.2.$]\label{Okounkov2}
Suppose that $p$ is a positive integer such that there exists a sequence $\{n_i\}_{i\in\mathbb{N}}$ of positive integers with $\lim\limits_{i\to \infty} n_i = \infty$ such that the sequence $\left\{\dfrac{\#S_{m(S)n_i}}{n_i^p}\right\}_{i\in\mathbb{N}}$ is bounded. Then $S$ is strongly nonnegative with $q(S)\leq p$.
\end{theorem}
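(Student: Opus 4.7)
The plan is to establish the contrapositive, treating two cases in which the stated conclusion could fail. Suppose either (i) $S$ is strongly nonnegative with $q(S) > p$, or (ii) $S$ is not strongly nonnegative. In each case I will show that $\#S_{m(S)n}/n^p \to +\infty$ along every sequence $n\to\infty$, which contradicts the boundedness assumed along $\{n_i\}$.

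Case (i) is immediate from Theorem~\ref{Okounkov1}. That theorem gives
\[
\frac{\#S_{m(S)n}}{n^{q(S)}}\;\longrightarrow\;\frac{\mathrm{vol}_{q(S)}\Delta(S)}{\mathrm{ind}(S)},
\]
a finite positive number, so multiplying by $n^{q(S)-p}\to\infty$ forces $\#S_{m(S)n_i}/n_i^p\to\infty$. Thus once strong nonnegativity of $S$ is established, the bound $q(S)\leq p$ comes for free.

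Case (ii) is the substance of the proof. The strategy is to approximate $S$ from below by a chain of finitely generated subsemigroups $T_1\subset T_2\subset \cdots \subset S$. Any finitely generated affine subsemigroup has a rational polyhedral cone and a bounded rational Okounkov polytope, and is therefore automatically strongly nonnegative in the sense of \cite{DC1}. By adjoining to each $T_k$ a single fixed element of $S$ at level $m(S)$, one can arrange $m(T_k)\mid m(S)$, so that $\#S_{m(S)n}\geq \#(T_k)_{m(S)n}$. Applying Theorem~\ref{Okounkov1} to each $T_k$ yields a lower bound
\[
\#(T_k)_{m(S)n}\;\geq\; c_k\, n^{q(T_k)}\qquad (n\gg 0),\quad c_k>0.
\]
Failure of strong nonnegativity of $S$ should allow one to choose generators of the $T_k$ so that either $q(T_k)\to\infty$ (for instance, when the closed cone $\overline{\mathrm{cone}(S)}$ contains a linear subspace, or when $\Delta(S)$ has unbounded affine hull) or the quantities $c_k\, n^{q(T_k)}$ eventually outgrow $n^p$ at the sampled levels $n_i$. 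In either situation, for each fixed $i$ and sufficiently large $k$, the ratio $\#S_{m(S)n_i}/n_i^p$ is forced past any prescribed bound, contradicting the hypothesis.

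The main obstacle is the combinatorial heart of Case (ii): using the precise definition of \emph{strongly nonnegative} from section~6.1 of \cite{DC1}, one must identify elements $s_1,s_2,\ldots\in S$ whose successive inclusion produces Okounkov bodies $\Delta(T_k)$ of genuinely growing dimension or volume, while keeping $m(T_k)\mid m(S)$ so that the inequality $\#S_{m(S)n}\geq\#(T_k)_{m(S)n}$ remains valid. A case analysis of the ways strong nonnegativity can fail — through unboundedness of $\Delta(S)$, presence of a linear subspace in $\overline{\mathrm{cone}(S)}$, or non-closedness of the cone at the origin — is needed to produce the $T_k$ uniformly; this step is the genuine content beyond the formal application of Theorem~\ref{Okounkov1}.
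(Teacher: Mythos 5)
This statement is quoted verbatim from \cite{DC1} (Theorem $6.2$) and is used in the present paper purely as a black box, so there is no internal proof to compare against; your attempt has to stand on its own. Your Case (i) is fine: if $S$ is strongly nonnegative with $q(S)>p$, Theorem \ref{Okounkov1} gives $\#S_{m(S)n}/n^{q(S)}\to \mathrm{vol}_{q(S)}\Delta(S)/\mathrm{ind}(S)$, a finite and strictly positive number (a convex set of dimension $q(S)$ has positive $q(S)$-dimensional volume), and multiplying by $n_i^{q(S)-p}\to\infty$ contradicts the assumed boundedness.

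Case (ii), however, is the entire content of the theorem, and you have not proved it; moreover the route you sketch cannot be completed as stated. First, the branch ``$q(T_k)\to\infty$'' is impossible: $T_k\subset S$ forces $\Delta(T_k)\subseteq\Delta(S)\subseteq\mathbb{R}^d\times\{1\}$, hence $q(T_k)\le q(S)\le d$ for every subsemigroup, finitely generated or not. Second, as soon as $p\ge q(S)$ (for instance any $p\ge d+1$, a case the theorem certainly covers), every lower bound obtainable from Theorem \ref{Okounkov1} applied to a finitely generated $T_k$ has the form $c_k n^{q(T_k)}$ with $q(T_k)\le p$, and such bounds can never make $\#S_{m(S)n_i}/n_i^p$ unbounded: for each fixed $k$ the ratio $\#(T_k)_{m(S)n}/n^p$ converges to a finite limit, and since the inequality $\#(T_k)_{m(S)n}\ge c_k n^{q(T_k)}$ is only valid for $n$ large depending on $k$, letting $c_k\to\infty$ does not produce any superpolynomial growth either. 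This is also where your quantifiers are inverted: ``for each fixed $i$ and sufficiently large $k$'' is illegitimate, because $\#S_{m(S)n_i}/n_i^p$ is a fixed finite number while the estimate for $T_k$ only applies for $n\gg_k 0$; the only correct order is to fix $k$, pass to the limit in $i$, and then vary $k$, which by the previous point cannot reach $n^p$. Two further soft spots: $S$ need not contain an element of level exactly $m(S)$ (the levels of $S$ generate $m(S)\mathbb{Z}$ as a group, not as a semigroup), so the device of ``adjoining a single fixed element of $S$ at level $m(S)$'' needs repair, and the claim that every finitely generated subsemigroup is automatically strongly nonnegative itself relies on the precise definition from Section $6.1$ of \cite{DC1}, which your argument never engages. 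A genuine proof of Case (ii) must extract, directly from the failure of strong nonnegativity, a large number of distinct elements of $S$ at a common (sampled) level --- a counting construction that does not factor through Theorem \ref{Okounkov1} for finitely generated subsemigroups; as written, your proposal defers exactly this step, so the theorem remains unproved.
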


\begin{lemma}\label{lemma1}
Suppose that $R$ is a Noetherian local ring with maximal ideal $m_R$ and let $B = \bigoplus\limits_{k\geq 0}B_k$ be a standard graded $R$-algebra. Let $$m_B = m_RB + \left(\bigoplus\limits_{k\geq 1}B_k\right)$$ denote the unique homogeneous maximal ideal of $B$. Then for any $e\in \mathbb{Z}_{\geq 1}$, we have $$m_B^{k(e+1)}\cap B_k = m_R^{ke}B_k \quad \forall k\in \mathbb{N}.$$
\end{lemma}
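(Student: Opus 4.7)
My plan is to establish the two inclusions of the asserted equality separately; both reduce to elementary manipulations once the standard-graded hypothesis on $B$ is used carefully.

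The containment $m_R^{ke} B_k \subseteq m_B^{k(e+1)} \cap B_k$ should be a one-liner: $m_R \subseteq m_B$ gives $m_R^{ke} \subseteq m_B^{ke}$, and standard-gradedness gives $B_k = B_1^k \subseteq m_B^k$, so multiplication yields $m_R^{ke} B_k \subseteq m_B^{ke}\cdot m_B^k = m_B^{k(e+1)}$, with the containment in $B_k$ being clear.

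For the substantive reverse inclusion, the key structural observation is that $m_B$ is generated as an ideal by $m_R \cup B_1$, i.e.\ $m_B = m_R B + B_1 B$; this is where standard-gradedness first enters. Expanding powers should give, for every $N \geq 0$,
$$m_B^N = \sum_{\substack{i+j = N \\ i,j\geq 0}} m_R^i\, B_j\, B,$$
using $B_1^j = B_j$. Each summand $m_R^i B_j B$ is a graded ideal concentrated in degrees $\geq j$, and when $j \leq k$ its degree-$k$ piece is $m_R^i\, B_j\, B_{k-j} = m_R^i B_k$, by a second application of standard-gradedness (which gives $B_j B_{k-j} = B_k$). Specializing $N = k(e+1)$ and extracting the degree-$k$ component, only indices with $0 \leq j \leq k$ contribute; these force $i \geq ke$, so every such term lies in $m_R^{ke} B_k$, and equality is witnessed by $j = k$, $i = ke$.

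The only real point of care is the bookkeeping for the degree-$k$ component of $m_B^N$, but no serious obstacle arises: both uses of standard-gradedness — to identify the generators of $m_B$, and to collapse $B_j B_{k-j}$ to $B_k$ — are immediate, after which the statement reduces to minimizing $i$ subject to $i+j = k(e+1)$ and $j \leq k$.
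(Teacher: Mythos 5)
Your proof is correct and takes essentially the same approach as the paper: both expand $m_B^{k(e+1)}$ as a power of the sum $m_RB + B_{\geq 1}$ (you write the irrelevant ideal as $B_1B$), use standard-gradedness to collapse $B_1^j = B_j$ and $B_jB_{k-j} = B_k$, and read off the degree-$k$ component. The paper computes the full graded decomposition of $m_B^n$ before intersecting with $B_k$, whereas you isolate the degree-$k$ piece immediately, but the underlying calculation is the same.
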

\begin{proof}
For any $n\in \mathbb{Z}_{\geq 1}$, we have that
\begin{align*}
    m_B^n &= \left(m_RB + \left(\bigoplus\limits_{k\geq 1}B_k\right)\right)^n\\
    &= \sum\limits_{j=0}^n m_R^{n-j}\left(\bigoplus\limits_{k\geq 1}B_k\right)^{j}\\
    &= \sum\limits_{j=0}^n m_R^{n-j}\left(\bigoplus\limits_{k\geq j}B_k\right)\\
    &= \left(\bigoplus\limits_{k=0}^{n-1} m_R^{n-k}B_k\right)\oplus \left(\bigoplus\limits_{k\geq n} B_k\right).
\end{align*}
Thus we see that 
\[ m_B^n\cap B_k = 
   \begin{cases} 
      m_R^{n-k}B_k &\forall\, 0\leq k\leq n-1 \\
      B_k &\forall\, k\geq n 
   \end{cases}
\]
The lemma now follows by taking $n=k(e+1)$.
\end{proof}

\begin{lemma}\label{lemma2}
Let $T$ be a Noetherian domain and let $\mathfrak{m}$ be a maximal ideal of $T$. Then $$\mathfrak{m}^nT_{\mathfrak{m}} \cap T = \mathfrak{m}^n \quad \forall n\in \mathbb{N}.$$
\end{lemma}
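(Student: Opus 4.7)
The inclusion $\mathfrak{m}^n \subseteq \mathfrak{m}^n T_\mathfrak{m} \cap T$ is immediate, so the whole content is the reverse inclusion. My plan is to pass to the quotient $T/\mathfrak{m}^n$ and exploit the fact that it is a local ring, making localization at $\mathfrak{m}$ harmless.

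First I will verify that $T/\mathfrak{m}^n$ is local with unique maximal ideal $\mathfrak{m}/\mathfrak{m}^n$. Any prime $\mathfrak{p}$ of $T$ containing $\mathfrak{m}^n$ must contain $\mathfrak{m}$ (since $\mathfrak{m}$ is prime), and since $\mathfrak{m}$ is maximal we must have $\mathfrak{p} = \mathfrak{m}$. Consequently every $s \in T\setminus \mathfrak{m}$ projects to a unit of $T/\mathfrak{m}^n$.

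Next I will pick $x \in \mathfrak{m}^n T_\mathfrak{m} \cap T$ and write, in $T_\mathfrak{m}$, $x = y/s$ with $y \in \mathfrak{m}^n$ and $s \in T \setminus \mathfrak{m}$. Here the domain hypothesis enters: it guarantees that the canonical map $T \hookrightarrow T_\mathfrak{m}$ is injective, so the identity $x = y/s$ in $T_\mathfrak{m}$ lifts to the genuine equality $sx = y$ in $T$, placing $sx$ in $\mathfrak{m}^n$. Reducing modulo $\mathfrak{m}^n$ then yields $\bar{s}\,\bar{x} = 0$ in $T/\mathfrak{m}^n$, and since $\bar{s}$ is a unit we conclude $\bar{x} = 0$, i.e., $x \in \mathfrak{m}^n$.

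There is essentially no serious obstacle here; the lemma is a clean application of the general principle that a module supported only at $\mathfrak{m}$ is unchanged by localization at $\mathfrak{m}$. The only delicate point is the role of the domain hypothesis: without it one would only obtain $s'(sx-y)=0$ for some $s' \notin \mathfrak{m}$, rather than $sx=y$ directly, and additional torsion considerations would be needed.
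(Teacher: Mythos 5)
Your proof is correct and is essentially the same as the paper's: both rely on the domain hypothesis to turn $x = y/s$ in $T_{\mathfrak{m}}$ into the genuine identity $sx = y$ in $T$, and then use that $\sqrt{\mathfrak{m}^n} = \mathfrak{m}$ to cancel $s$. The paper phrases the last step by saying $\mathfrak{m}^n$ is $\mathfrak{m}$-primary, while you phrase it by saying $T/\mathfrak{m}^n$ is local so $\bar{s}$ is a unit --- two standard descriptions of the same fact (though note your parenthetical ``since $\mathfrak{m}$ is prime'' should really be ``since $\mathfrak{p}$ is prime'').
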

\begin{proof}
Fix an $n\in \mathbb{N}$ and observe that $\mathfrak{m}^n$ is $\mathfrak{m}$-primary since $\sqrt{\mathfrak{m}^n}=\mathfrak{m}$ is a maximal ideal. Now
\begin{align*}
    a \in \mathfrak{m}^nT_{\mathfrak{m}} \cap T &\iff as \in \mathfrak{m}^n \,\text{for some $s\in T\setminus \mathfrak{m}$}\\
    &\iff a\in \mathfrak{m}^n \,\text{or}\,s\in \sqrt{\mathfrak{m}^n}=\mathfrak{m} \quad (\text{$\mathfrak{m}^n$ is $\mathfrak{m}$-primary})\\
    &\iff a\in \mathfrak{m}^n \quad (s\in T\setminus \mathfrak{m})
\end{align*}
\end{proof}
\begin{lemma}\label{comp1}
Let $R$ be a Noetherian local ring with maximal ideal $m_R$, $R[x_1,\ldots,x_n]$ be a polynomial ring over $R$ and $I$ be a homogeneous ideal in $R[x_1,\ldots,x_n]$. Let $\mathfrak{m}$ denote the homogeneous maximal ideal of $R[x_1,\ldots,x_n]$ and let $$B = \dfrac{R[x_1,\ldots,x_n]}{I}.$$ Then the $\mathfrak{m}$-adic completion of $B$ is $$\hat{B}\cong \dfrac{\hat{R}[[x_1,\ldots,x_n]]}{I\hat{R}[[x_1,\ldots,x_n]]},$$ where $\hat{R}$ is the $m_R$-adic completion of $R$ and $\hat{R}[[x_1,\ldots,x_n]]$ is the power series ring over $\hat{R}$ in $x_1,\ldots,x_n$.
\end{lemma}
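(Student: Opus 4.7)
The plan is to reduce the lemma to the case $I=0$ by applying the $\mathfrak{m}$-adic completion functor to the surjection $R[x_1,\ldots,x_n]\twoheadrightarrow B$, and then to identify the completion of the polynomial ring directly as a power series ring.

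First I would set $S = R[x_1,\ldots,x_n]$ and identify $\hat{S}$ with $\hat{R}[[x_1,\ldots,x_n]]$. The computation in the proof of Lemma \ref{lemma1} shows that for every $n\geq 1$ one has $\mathfrak{m}^n\cap S_k = m_R^{n-k}S_k$ for $0\leq k\leq n-1$ and $\mathfrak{m}^n\cap S_k = S_k$ for $k\geq n$, so
$$S/\mathfrak{m}^n \;\cong\; \bigoplus_{k=0}^{n-1} S_k/m_R^{n-k}S_k.$$
Since each $S_k$ is a free $R$-module of finite rank and the transition maps are the obvious truncations, passing to the inverse limit identifies $\hat{S}$ with the set of sequences $(f_k)_{k\geq 0}$ with $f_k\in\hat{R}\otimes_R S_k$, which one recognizes as the power series ring $\hat{R}[[x_1,\ldots,x_n]]$.

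Next I would apply the $\mathfrak{m}$-adic completion functor to the short exact sequence of finitely generated $S$-modules
$$0\;\longrightarrow\;I\;\longrightarrow\;S\;\longrightarrow\;B\;\longrightarrow\;0.$$
Because $S$ is Noetherian, this functor is exact on finitely generated modules and agrees with $(-)\otimes_S\hat{S}$, so we obtain a short exact sequence $0\to I\hat{S}\to\hat{S}\to\hat{B}_S\to 0$, where $\hat{B}_S$ denotes the $\mathfrak{m}$-adic completion of $B$ viewed as an $S$-module. Substituting the identification of $\hat{S}$ yields the asserted formula. A minor check that remains is that $\hat{B}_S$ coincides with the completion $\hat{B}$ of $B$ viewed as a module over itself with respect to the image $\mathfrak{m}_B$ of $\mathfrak{m}$ in $B$; but this is immediate from the equality $\mathfrak{m}^kB = \mathfrak{m}_B^k$ for all $k$.

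I do not foresee any serious obstacle here: the only nonformal step is the explicit identification of $\hat{S}$ with $\hat{R}[[x_1,\ldots,x_n]]$, and even that reduces to a routine inverse-limit computation built on Lemma \ref{lemma1}; the rest is flat base change along $S\to\hat{S}$.
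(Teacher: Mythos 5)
Your argument is correct, and its overall shape coincides with the paper's: first identify $\widehat{R[x_1,\ldots,x_n]}$ with $\hat{R}[[x_1,\ldots,x_n]]$, then use exactness of $\mathfrak{m}$-adic completion on the Noetherian ring $S=R[x_1,\ldots,x_n]$ to pass to the quotient by $I$. Where you diverge is in how the first identification is established. The paper cites \cite[Corollary $5$, Page $171$]{Mat1}, which exhibits $\widehat{R[x_1,\ldots,x_n]}$ as a quotient of the power series ring $R[x_1,\ldots,x_n][[y_1,\ldots,y_d,z_1,\ldots,z_n]]$ by the ideal $(y_i-u_i,\;z_j-x_j)$, and then eliminates variables to reach $\hat{R}[[x_1,\ldots,x_n]]$. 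You instead compute $\varprojlim_n S/\mathfrak{m}^n$ directly from the graded decomposition $S/\mathfrak{m}^n \cong \bigoplus_{k=0}^{n-1} S_k/m_R^{n-k}S_k$ supplied by the computation in Lemma \ref{lemma1}, identifying the limit degreewise with $\prod_{k\geq 0}\left(\hat{R}\otimes_R S_k\right)$ and hence with $\hat{R}[[x_1,\ldots,x_n]]$. Your route is more self-contained and makes the degreewise structure of $\hat{S}$ completely explicit, at the small cost of one further routine check that the multiplication on the inverse limit agrees with the Cauchy product of power series (the coefficient of each monomial is a finite sum, so no convergence issues arise). The second half of your proof — exactness of completion on finitely generated $S$-modules, the identification $\hat{I}=I\hat{S}$ via flatness of $\hat{S}$ over $S$, and the observation that the $\mathfrak{m}$-adic and $\mathfrak{m}_B$-adic completions of $B$ coincide because $\mathfrak{m}^kB=\mathfrak{m}_B^k$ — is exactly what the paper leaves implicit when it writes $\hat{B}\cong \widehat{R[x_1,\ldots,x_n]}/I\widehat{R[x_1,\ldots,x_n]}$, so there is no gap there.
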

\begin{proof}
Let $u_1,\ldots,u_d$ be the generators of $m_R$. Then, by \cite[Corollary $5$, Page $171$]{Mat1}, the $\mathfrak{m}$-adic completion of $R[x_1,\ldots,x_n]$ is $$\widehat{R[x_1,\ldots,x_n]}\cong \dfrac{R[x_1,\ldots,x_n][[y_1,\ldots,y_d,z_1,\ldots,z_n]]}{(y_1-u_1,\ldots,y_d-u_d,z_1-x_1,\ldots,z_n-x_n)}$$ where $R[x_1,\ldots,x_n][[y_1,\ldots,y_d,z_1,\ldots,z_n]]$ is the power series ring in $y_1,\ldots,y_d,z_1,\ldots,z_n$ over $R[x_1,\ldots,x_n]$. Rearranging terms, we see that
\begin{align*}
\widehat{R[x_1,\ldots,x_n]} &\cong \dfrac{R[x_1,\ldots,x_n][[y_1,\ldots,y_d,z_1,\ldots,z_n]]}{(y_1-u_1,\ldots,y_d-u_d,z_1-x_1,\ldots,z_n-x_n)}\\
&\cong \dfrac{\dfrac{R[[y_1,\ldots,y_d]]}{(y_1-u_1,\ldots,y_d-u_d)}[x_1,\ldots,x_n][[z_1,\ldots,z_n]]}{(z_1-x_1,\ldots,z_n-x_n)}\\
&\cong \hat{R}[[x_1,\ldots,x_n]].
\end{align*}
Thus
\begin{equation*}
    \hat{B} \cong \dfrac{\widehat{R[x_1,\ldots,x_n]}}{I\widehat{R[x_1,\ldots,x_n]}} \cong \dfrac{\hat{R}[[x_1,\ldots,x_n]]}{I\hat{R}[[x_1,\ldots,x_n]]}.
\end{equation*}
\end{proof}
\begin{lemma}\label{comp2}
Suppose that $R$ is an excellent local ring with maximal ideal $m_R$ and let $$B = \dfrac{R[x_1,\ldots,x_n]}{I}$$ where $I$ is a radical homogeneous ideal in the polynomial ring $R[x_1,\ldots,x_n]$ with $I\cap R = (0)$. Let $\hat{R}$ be the $m_R$-adic completion of $R$ and let $$C = \dfrac{\hat{R}[x_1,\ldots,x_n]}{I\hat{R}[x_1,\ldots,x_n]}.$$ Then the localisation of $C$ at its graded maximal ideal is analytically unramified, the ideal $I\hat{R}[x_1,\ldots,x_n]$ has an irredundant primary decomposition $$I\hat{R}[x_1,\ldots,x_n] = \bigcap\limits_{i=1}^t P_i$$ where the $P_i$ are homogeneous prime ideals in $\hat{R}[x_1,\ldots,x_n]$ and the localisation of $$\dfrac{C}{P_iC}\cong \dfrac{\hat{R}[x_1,\ldots,x_n]}{P_i}$$ at its graded maximal ideal is analytically irreducible for $1\leq i\leq r$.
\end{lemma}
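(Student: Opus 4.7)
Let $\mathfrak{m}$ and $\mathfrak{n}$ denote the graded maximal ideals of $B$ and $C$, respectively. I would establish the three assertions in turn, with excellence of $R$ and Lemma~\ref{comp1} as the principal tools.

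For the analytic unramifiedness of $C_{\mathfrak{n}}$, note that $B_{\mathfrak{m}}=(R[x_1,\ldots,x_n]/I)_{\mathfrak{m}}$ is excellent (excellence is inherited by finite-type $R$-algebras and localizations) and reduced because $I$ is radical; an excellent reduced local ring is analytically unramified, so $\widehat{B_{\mathfrak{m}}}$ is reduced. Applying Lemma~\ref{comp1} with base $R$ and then with base $\hat{R}$ (using $\widehat{\hat{R}}=\hat{R}$) identifies $\widehat{B_{\mathfrak{m}}}$ and $\widehat{C_{\mathfrak{n}}}$ with the same ring $\hat{R}[[x_1,\ldots,x_n]]/I\hat{R}[[x_1,\ldots,x_n]]$, yielding the analytic unramifiedness of $C_{\mathfrak{n}}$. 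The decomposition of $I\hat{R}[x_1,\ldots,x_n]$ follows once I know that $C$ itself is reduced: excellence makes $R\to\hat{R}$ a regular homomorphism, base change preserves regularity, so $B\to B\otimes_R\hat{R}=C$ is regular, and regular maps preserve reducedness. Hence $I\hat{R}[x_1,\ldots,x_n]$ is radical, equal to the intersection of its finitely many minimal primes $P_1,\ldots,P_t$; these are automatically homogeneous because the ideal is, giving the irredundant decomposition.

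The main obstacle is the analytic irreducibility of each $(\hat{R}[x_1,\ldots,x_n]/P_i)_{\mathfrak{n}}$. Lemma~\ref{comp1} identifies the relevant completion with $\hat{R}[[x_1,\ldots,x_n]]/P_i\hat{R}[[x_1,\ldots,x_n]]$, so I must show that each $P_i\hat{R}[[x_1,\ldots,x_n]]$ is prime. The same excellent-reduced argument applied to the domain $\hat{R}[x_1,\ldots,x_n]/P_i$ shows $P_i\hat{R}[[x_1,\ldots,x_n]]$ is at least radical. The faithfully flat completion map $\hat{R}[x_1,\ldots,x_n]_{\mathfrak{n}}\to\hat{R}[[x_1,\ldots,x_n]]$ contracts minimal primes to minimal primes, giving a surjection from the minimal primes of the reduced ring $\widehat{C_{\mathfrak{n}}}$ onto $\{P_1C_{\mathfrak{n}},\ldots,P_tC_{\mathfrak{n}}\}$; the remaining claim is that this surjection is a bijection, so that each $P_i$ carries a unique analytic branch. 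This is the delicate step: I would justify it by showing that the base change from $R$ to the complete local ring $\hat{R}$ has already realised every analytic branch of $B_{\mathfrak{m}}$, so that the subsequent $\mathfrak{n}$-adic completion of $\hat{R}[x_1,\ldots,x_n]_{\mathfrak{n}}$ produces no further splitting. Formally this amounts to comparing the minimal primes of $\widehat{B_{\mathfrak{m}}}$ (the analytic branches of $B_{\mathfrak{m}}$ at $\mathfrak{m}$) with those of $I\hat{R}[x_1,\ldots,x_n]$ and running a count, an input I expect to draw from the structural theory of excellent local rings.
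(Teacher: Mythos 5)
Your treatment of the first two assertions is sound. For the analytic unramifiedness of $C_{\mathfrak{n}}$ you follow essentially the paper's route (excellent reduced local $\Rightarrow$ analytically unramified, then Lemma~\ref{comp1} to identify $\widehat{B_{\mathfrak{m}}}$ with $\widehat{C_{\mathfrak{n}}}$). For the reducedness of $C$ you substitute a cleaner argument via regular homomorphisms -- $R\to\hat R$ regular by excellence, base change along the finite-type map $R\to B$ gives $B\to C$ regular, and regular maps preserve reducedness -- in place of the paper's more hands-on argument that $C$ injects into the reduced ring $\hat C$. Both work; yours buys some slickness at the cost of invoking heavier machinery.

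The gap is in the third assertion. Your plan for showing each $(\hat R[x_1,\ldots,x_n]/P_i)_{\mathfrak{n}}$ is analytically irreducible -- ``the base change to $\hat R$ has already realised every analytic branch, then run a count'' -- is not a proof, and more importantly the strategy as stated cannot work because it never uses the homogeneity of $P_i$. Without homogeneity the statement is false: take $R=k$ a field (so $\hat R=R$ and base change does nothing), $B=k[x,y]/(y^2-x^2-x^3)$ a domain; its completion at the origin factors as $(y-x\sqrt{1+x})(y+x\sqrt{1+x})$, so the local ring is not analytically irreducible even though it is an excellent local domain and no further base change is available. Thus there is no ``branch count drawn from the structural theory of excellent rings'' that settles this; the graded structure must do the work. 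The paper's proof isolates exactly this: writing $U=\hat R/(P_i\cap\hat R)$ (a complete local domain) and $V=U[x_1,\ldots,x_n]/Q$, it observes that because $U$ is already complete, the $\mathfrak{n}$-adic completion of $V$ agrees with the $(x_1,\ldots,x_n)$-adic completion $V^*$. Since $V$ is nonnegatively graded with $V_0=U$, one has $\mathrm{gr}_{(x_1,\ldots,x_n)}(V)\cong V$, a domain, and $\bigcap_s(x_1,\ldots,x_n)^sV=0$; the Zariski--Samuel theorem (a local ring whose associated graded with respect to an ideal of finite intersection is a domain is itself a domain, and so is its completion) then gives that $V^*$ is a domain. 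You need some version of this graded argument; the branch-counting heuristic does not replace it.
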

\begin{proof}
Let $\hat{C}$ be the $(m_R + (x_1,\ldots,x_n))$-adic completion of $C$. From lemma \ref{comp1}, we have that $$\hat{C}\cong \dfrac{\hat{R}[[x_1,\ldots,x_n]]}{I\hat{R}[[x_1,\ldots,x_n]]} \cong \hat{B}.$$ $\hat{C}$ is reduced since $B$ is excellent and reduced. Thus $I\hat{R}[[x_1,\ldots,x_n]]$ is a radical ideal in $\hat{R}[[x_1,\ldots,x_n]]$. Now $\hat{R}[[x_1,\ldots,x_n]]$ is the $(m_{\hat{R}}+(x_1,\ldots,x_n))$-adic completion of $\hat{R}[x_1,\ldots,x_n]$. Since the completion of a local ring is faithfully flat, then by \cite[(4.C)(iii), Page $28$]{Mat1}, $$\left(I\hat{R}[[x_1,\ldots,x_n]]\right)\cap \hat{R}[x_1,\ldots,x_n]_{(m_{\hat{R}}+(x_1,\ldots,x_n))} = I\hat{R}[x_1,\ldots,x_n]_{(m_{\hat{R}}+(x_1,\ldots,x_n))}.$$ Since $I\hat{R}[x_1,\ldots,x_n]$ is a homogeneous ideal in $\hat{R}[x_1,\ldots,x_n]$, so $$I\hat{R}[[x_1,\ldots,x_n]]\cap \hat{R}[x_1,\ldots,x_n] = I\hat{R}[x_1,\ldots,x_n].$$ Hence we get an injection of rings $$C = \dfrac{\hat{R}[x_1,\ldots,x_n]}{I\hat{R}[x_1,\ldots,x_n]}\hookrightarrow \dfrac{\hat{R}[[x_1,\ldots,x_n]]}{I\hat{R}[[x_1,\ldots,x_n]]}\cong \hat{C}$$ and thus $C$ is reduced since $\hat{C}$ is reduced. We have that $C_{(m_{\hat{R}}+(x_1,\ldots,x_n))}$ is analytically unramified since the reduced ring $\hat{C}$ is the $(m_{\hat{R}}+(x_1,\ldots,x_n))$-adic completion of $C$.
\par By \cite[Theorem $9$, Page $153$]{Zar}, the radical homogeneous ideal $I\hat{R}[x_1,\ldots,x_n]$ has a primary decomposition $$I\hat{R}[x_1,\ldots,x_n] = \bigcap\limits_{i=1}^t P_i$$ where $P_i$ are the homogeneous prime ideals in $\hat{R}[x_1,\ldots,x_n]$ which are minimal primes of $I\hat{R}[x_1,\ldots,x_n]$. For a particular $i$, $1\leq i\leq t$, let $U = \faktor{\hat{R}}{P_i\cap \hat{R}}$ which is a complete local domain and let $Q = P_i U[x_1,\ldots,x_n]$. We have that $$\dfrac{C}{P_i C} \cong \dfrac{\hat{R}[x_1,\ldots,x_n]}{P_i}\cong \dfrac{U[x_1,\ldots,x_n]}{Q}.$$ Let $V = \faktor{U[x_1,\ldots,x_n]}{Q}$ and we will now show that $\hat{V}$ is a domain, where $\hat{V}$ denotes the $(m_U + (x_1,\ldots,x_n))$-adic completion of $V$. From lemma \ref{comp1}, we have that $$\hat{V}\cong \dfrac{U[[x_1,\ldots,x_n]]}{QU[[x_1,\ldots,x_n]]}.$$ Let $V^*$ be the $(x_1,\ldots,x_n)$-adic completion of $V$. From \cite[Corollary $5$, Page $171$]{Mat1}, it follows that $$V^*\cong \dfrac{U[[x_1,\ldots,x_n]]}{QU[[x_1,\ldots,x_n]]} \cong \hat{V}.$$ Also by \cite[Proposition $10.22.(ii)$, Page $111$]{Atiyah}, we get that $$\mathrm{gr}_{(x_1,\ldots,x_n)}(V^*)\cong \mathrm{gr}_{(x_1,\ldots,x_n)}(V).$$ Since $V$ is graded, we have that $$\mathrm{gr}_{(x_1,\ldots,x_n)}(V)\cong V$$ and the latter is a domain. Further since $V$ is graded, it also gives us that $$\bigcap\limits_{s\geq 0}(x_1,\ldots,x_n)^sV = 0.$$ Thus the local ring $\hat{V}\cong V^*$ is a domain by \cite[Theorem $1$, Page $249$]{Zar}
\end{proof}
The proof of Theorem $\ref{main}$ uses methods of the proof of Theorem $6.3.$ in \cite{DC1}. Here we must blow up a different ideal and construct a different valuation to make the argument work in our case. Also, we must make a more delicate analysis of the grading in our argument.
\begin{theorem}\label{main}
Suppose that $R$ is a Noetherian complete local domain with maximal ideal $m_R$, $B = \bigoplus\limits_{n\geq 0}B_n$ is a standard graded $R$-algebra which is also a domain and let $A = \bigoplus\limits_{n\geq 0}A_n$ be a graded $R$-subalgebra of $B$. Suppose that $A_1\neq 0$ and that $p\in \mathbb{Z}_{\geq 1}$ is such that for all $c\in \mathbb{Z}_{\geq 1}$, there exists $\gamma_c\in \mathbb{R}_{>0}$ such that 
\begin{equation}\label{limsup}
    l_R\left(\faktor{A_n}{(m_R^{cn}B)\cap A_n}\right)<\gamma_cn^p
\end{equation}
for all $n\geq 0$. Then for any fixed positive integer $c$, $$\lim_{n\to \infty}\dfrac{l_R\left(\faktor{A_n}{(m_R^{cn}B)\cap A_n}\right)}{n^p}$$ exists.
\end{theorem}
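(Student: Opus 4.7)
The plan is to adapt the Okounkov body argument from the proof of Theorem $6.3$ of \cite{DC1}, using the two quoted results, Theorems \ref{Okounkov1} and \ref{Okounkov2}, as the analytic engine. The strategy reduces the existence of the limit to constructing a strongly nonnegative semigroup $S \subset \mathbb{Z}^{p+1}$ whose counting function $\#S_n$ equals (or is tightly controlled by) $l_R(A_n/((m_R^{cn}B)\cap A_n))$.

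First, since $R$ is a complete Noetherian local domain, I would invoke Cohen's structure theorem to realise $R$ as a finite module over a complete regular local subring $R_0$, and fix a regular system of parameters of $R_0$ whose powers comparably refine the $m_R$-adic filtration. Lengths over $R$ are then comparable to lengths over $R_0$, allowing the residue-field calculations in the semigroup construction below to be carried out over $R_0/m_{R_0}$ without loss.

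The central construction is a valuation $\nu$ on the quotient field of the graded domain $B$, centered at the homogeneous maximal ideal $m_B$, whose value group embeds into $\mathbb{Z}^{p+1}$ (ordered lexicographically), and which simultaneously (i) detects the grading, so that the first coordinate of $\nu(f)$ is the degree of any homogeneous $f$, and (ii) faithfully records the $m_R^{cn}B$-filtration, i.e.\ for $f \in A_n$, the image of $f$ in $A_n/((m_R^{cn}B)\cap A_n)$ is determined by the value $\nu(f)$. Such a $\nu$ is constructed by blowing up an ideal assembled from the chosen parameters of $R_0$ and the algebra generators of $B_1$, then following a sufficiently general flag of subvarieties on the blowup to read off leading coefficients successively. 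Cutkosky's own remark that ``we must blow up a different ideal'' is precisely this step: the ideal blown up is not purely polynomial as in \cite{DC1}, but must be adapted to both the grading and the $m_R$-adic filtration, and Lemma \ref{lemma1} enters here to relate $m_R^{cn}B$ and $m_B$-adic data on $B$. With $\nu$ fixed, set
\[
S = \{(n, \nu(f)) : n \geq 0,\ f \in A_n,\ f \notin (m_R^{cn}B)\cap A_n\} \cup \{0\} \subset \mathbb{Z}^{p+1}.
\]
Because $A$ is a graded subalgebra and $\nu$ is a valuation, $S$ is closed under addition; properties (i) and (ii) together with the length reduction of the previous paragraph give $\#S_n = l_R(A_n/((m_R^{cn}B)\cap A_n))$, so hypothesis (\ref{limsup}) reads $\#S_n < \gamma_c n^p$. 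Theorem \ref{Okounkov2} then yields that $S$ is strongly nonnegative with $q(S) \leq p$, and Theorem \ref{Okounkov1} delivers the limit of $\#S_{m(S)n}/n^{q(S)}$. A short bookkeeping argument, parallel to that in \cite{DC1}, promotes this to the full limit in $n$ at the prescribed exponent $p$ (the limit being zero when $q(S) < p$).

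The main obstacle I foresee is requirement (ii) of the valuation construction: producing a valuation of rank at most $p+1$ that faithfully separates cosets of $A_n$ modulo $(m_R^{cn}B)\cap A_n$, while remaining compatible with the grading. This is where the author's promised ``more delicate analysis of the grading'' will be needed, and where the hypotheses that $B$ is a domain and $A_1 \neq 0$ will be used to ensure enough homogeneous elements of controlled valuation to realize the full rank $p+1$ and to identify $\#S_n$ with the $R$-length on the nose.
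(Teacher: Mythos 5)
Your proposal correctly identifies the overall strategy — construct a valuation centered at the homogeneous maximal ideal of $B$ and convert lengths into counts of lattice points in semigroups to which Theorems~\ref{Okounkov1} and~\ref{Okounkov2} apply — but the specific mechanism you describe would not work, and there are several genuine gaps.

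The most serious problem is your claim that
$S = \{(n,\nu(f)) : f \in A_n,\ f \notin (m_R^{cn}B)\cap A_n\}\cup\{0\}$
``is closed under addition.'' It is not. If $f\in A_i$ lies outside $(m_R^{ci}B)\cap A_i$ and $g\in A_j$ lies outside $(m_R^{cj}B)\cap A_j$, nothing forces $fg$ to lie outside $(m_R^{c(i+j)}B)\cap A_{i+j}$; the submodules $(m_R^{cn}B)\cap A_n$ form a filtration, and products of elements in the complements are not controlled. Related to this, the requirement (ii) you impose on the valuation — that it ``faithfully record the $m_R^{cn}B$-filtration'' so that $\#S_n$ is exactly the length — is too strong to achieve. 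The relationship between valuation ideals and the $m_R^{cn}B$-adic filtration is one-sided: via \cite[Lemma 4.3]{DC2} (together with Lemmas~\ref{lemma1} and~\ref{lemma2}) one can arrange $K_{\beta n}\cap A_n \subset (m_R^{cn}B)\cap A_n$, but no constant multiple of $n$ gives the reverse containment. The paper resolves this not by building a perfect valuation but by writing the target length as a \emph{difference}
$l_R\!\left(A_n / ((m_R^{cn}B)\cap A_n)\right) = l_R\!\left(A_n / (K_{\beta n}\cap A_n)\right) - l_R\!\left(\overline{A}_n / (K_{\beta n}\cap \overline{A}_n)\right)$
where $\overline{A}_n = (m_R^{cn}B)\cap A_n$, and then handling each term separately. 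This decomposition, which is absent from your proposal, is the crux of the argument.

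There are two further points where your construction diverges from what actually works. First, the valuation built in the paper is rank one: it takes a regular local ring $S$ (obtained from the normalized blowup of the closed point of $\mathrm{Spec}(B_{m_B})$) with regular parameters $y_1,\dots,y_d$, $d=\dim B$, and assigns $\mathbb{Q}$-linearly independent real weights $\lambda_i\geq 1$, so the value group sits in $\mathbb{R}$ and the relevant lattice is $\mathbb{N}^{d+1}$, not $\mathbb{Z}^{p+1}$ with a lexicographic order. The integer $p$ only enters as the polynomial exponent fed to Theorem~\ref{Okounkov2}; it is not the ambient dimension of the semigroup. Second, a single semigroup does not capture the length, because the residue field extension $[S/m_S : R/m_R]=l$ may exceed $1$. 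Each term of the difference above is in turn a sum over $t=1,\dots,l$ of counting functions $\#\Gamma^{(t)}_n$, where $\Gamma^{(t)}$ is defined by the membership condition $\dim_{R/m_R}\!\left((A_n\cap K_\lambda)/(A_n\cap K^+_\lambda)\right)\geq t$. This inequality-based criterion, not an ``outside the submodule'' criterion, is what makes the $\Gamma^{(t)}$ genuine subsemigroups (Lemma~\ref{semigroup}) and so accessible to the Okounkov machinery. Finally, your preliminary Cohen structure step is unnecessary; the reduction to length computations over $R/m_R$ happens directly through the regular local ring $S$ on the normalized blowup and the valuation ideals, with no need to pass to a coefficient regular subring.
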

We now prove Theorem $\ref{main}$. Let $c>0$ be a fixed positive integer. Let $m_B$ denote the homogeneous maximal ideal of $B$ and it follows from Lemma \ref{comp2} that $B_{m_B}$ is analytically irreducible. Let $$\pi\colon X \to \text{Spec}(B_{m_B})$$ be the normalization of the blow up of the unique closed point $\{m_BB_{m_B}\}$ of $\text{Spec}(B_{m_B})$. The morphism $\pi$ is of finite type since $R$ is excellent. The fibre $\pi^{-1}\left(\{m_BB_{m_B}\}\right)$ is a closed subscheme of codimension $1$ in $X$. Since $X$ is normal, its singular locus has codimension $\geq 2$. So there exists a closed point $x\in \pi^{-1}\left(\{m_BB_{m_B}\}\right)$ such that $\mathcal{O}_{X,x}$ is a regular local ring and we set $(S,m_S)$ to be this local ring. From the construction, we see that $S$ dominates $B_{m_B}$, $\dim S = \dim B$, $S$ is essentially of finite type over $B$ with $\text{QF}(B)=\text{QF}(S)$ and the residue field map $$\faktor{B_{m_B}}{m_BB_{m_B}} = \faktor{R}{m_R}\hookrightarrow \faktor{S}{m_S}$$ is a finite field extension. Let $l = \left[S/m_S\colon R/m_R\right]<\infty.$
\par Let $d = \dim B = \dim S$ and let $y_1,\ldots,y_d$ be a regular system of parameters in $S$. Fix $\mathbb{Q}$-linearly independent real numbers $\lambda_1,\ldots,\lambda_d$ such that $\lambda_i\geq 1$ for all $1\leq i\leq d$. Let $\mathcal{C}(S)$ be a coefficient set of $S$. Since $S$ is a regular local ring, for any $r\in \mathbb{Z}_{\geq 1}$ and $f\in S\setminus\{0\}$, there is a unique expression $$f = \sum\limits_{(n_1,\ldots,n_d)\in \mathbb{N}^d}a_{n_1,\ldots,n_d}y_1^{n_1}\cdots y_d^{n_d} + f_r$$ where $a_{n_1,\ldots,n_d}\in \mathcal{C}(S)$, $f_r\in m_S^r$ and $n_1+\cdots +n_d<r$ for all $(n_1,\ldots,n_d)$ appearing in the sum. We can take $r$ large enough so that $n_1\lambda_1+\cdots +n_d\lambda_d<r$ for some term with $a_{n_1,\ldots,n_d}\neq 0$. We define a valuation $\nu$ of $\text{QF}(B)$ which dominates $B_{m_B}$ with value group $\Gamma_{\nu} = \mathbb{Z}\lambda_1+\cdots + \mathbb{Z}\lambda_d \subset \mathbb{R}$, by prescribing $$\nu(f)=\min\{n_1\lambda_1+\cdots +n_d\lambda_d\mid a_{n_1,\cdots,n_d}\neq 0\}.$$ Since there is a unique monomial giving the minimum, we have that $$\faktor{S}{m_S}=\faktor{V_{\nu}}{m_{\nu}}.$$ For $\lambda\in\mathbb{R}_{\geq 0}$, define valuation ideals in the valuation ring $V_{\nu}$ of $\nu$, by
\begin{align*}
    K_{\lambda} &= \{f\in\mathrm{QF}(B)\mid \nu(f)\geq \lambda\},\\
    K_{\lambda}^+ &= \{f\in\mathrm{QF}(B)\mid \nu(f) > \lambda\}.
\end{align*}
There exists a constant $\alpha\in\mathbb{Z}_{\geq 1}$ such that 
\begin{equation}\label{eq0}
    K_{\alpha n} \cap B_{m_B} \subset m_B^nB_{m_B} \quad \forall n\in \mathbb{N}.
\end{equation}
The proof of this formula follows from \cite[Lemma $4.3.$]{DC2} and the fact that $B_{m_B}$ is analytically irreducible is necessary for the validity of this formula. Set $\beta = \alpha (c+1)$ and for all $n\in \mathbb{N}$, we have that 
\begin{align*}
    K_{\beta n}\cap A_n &= \left(\left(\left(K_{\alpha (c+1) n}\cap B_{m_B}\right)\cap B\right)\cap B_n\right)\cap A_n &&\\
    &\subset \left(\left(m_B^{(c+1)n}B_{m_B}\cap B\right)\cap B_n\right)\cap A_n &&\quad(\text{from (\ref{eq0})})\\
    &= \left(m_B^{(c+1)n}\cap B_n\right)\cap A_n &&\quad(\text{from Lemma \ref{lemma2}})\\
    &= \left(m_R^{cn}B_n\right)\cap A_n &&\quad(\text{from Lemma \ref{lemma1}})\\
    &= \left(m_R^{cn}B\right)\cap A_n &&
\end{align*}
Let $\overline{A}_n = \left(m_R^{cn}B\right)\cap A_n$. We have that
\begin{equation}\label{eq1}
    l_R\left(\faktor{A_n}{\left(m_R^{cn}B\right)\cap A_n}\right) = l_R\left(\faktor{A_n}{K_{\beta n}\cap A_n}\right) - l_R\left(\faktor{\overline{A}_n}{K_{\beta n}\cap \overline{A}_n}\right)
\end{equation}
For $t\geq 1$, define
\begin{align*}
    \Gamma^{(t)} &= \left\{(n_1,\ldots,n_d,n)\in\mathbb{N}^{d+1}\mid \dim_{R/m_R}\left(\dfrac{A_n\cap K_{n_1\lambda_1+\cdots+n_d\lambda_d}}{A_n\cap K^+_{n_1\lambda_1+\cdots+n_d\lambda_d}}\right)\geq t\;\mathrm{and}\; n_1+\cdots+n_d\leq \beta n\right\},\\
    \overline{\Gamma}^{(t)} &= \left\{(n_1,\ldots,n_d,n)\in\mathbb{N}^{d+1}\mid \dim_{R/m_R}\left(\dfrac{\overline{A}_n\cap K_{n_1\lambda_1+\cdots+n_d\lambda_d}}{\overline{A}_n\cap K^+_{n_1\lambda_1+\cdots+n_d\lambda_d}}\right)\geq t\;\mathrm{and}\; n_1+\cdots+n_d\leq \beta n\right\}.
\end{align*}
For all $n\in \mathbb{N}$ and $\lambda\in\mathbb{R}_{\geq 0}$, we have natural $R/m_R$-vector space inclusions
\begin{align*}
    \faktor{A_n\cap K_{\lambda}}{A_n\cap K^+_{\lambda}}&\hookrightarrow \faktor{K_{\lambda}}{K^+_{\lambda}}\cong \faktor{V_{\nu}}{m_{\nu}},\\
    \faktor{\overline{A}_n\cap K_{\lambda}}{\overline{A}_n\cap K^+_{\lambda}}&\hookrightarrow \faktor{K_{\lambda}}{K^+_{\lambda}}\cong \faktor{V_{\nu}}{m_{\nu}}.
\end{align*}
Therefore $\Gamma^{(t)}=\emptyset$ and $\overline{\Gamma}^{(t)}=\emptyset$ for all $t>l$. We also have that if $n_1+\cdots+n_d\leq \beta n$ then
\begin{align*}
    \dim_{R/m_R}\left(\dfrac{A_n\cap K_{n_1\lambda_1+\cdots+n_d\lambda_d}}{A_n\cap K^+_{n_1\lambda_1+\cdots+n_d\lambda_d}}\right) &= \#\left\{t\mid (n_1,\ldots,n_d,n)\in \Gamma^{(t)}\right\},\\
    \dim_{R/m_R}\left(\dfrac{\overline{A}_n\cap K_{n_1\lambda_1+\cdots+n_d\lambda_d}}{\overline{A}_n\cap K^+_{n_1\lambda_1+\cdots+n_d\lambda_d}}\right) &= \#\left\{t\mid (n_1,\ldots,n_d,n)\in \overline{\Gamma}^{(t)}\right\}.
\end{align*}
Further note that $$n_1\lambda_1+\cdots+n_d\lambda_d<\beta n \implies n_1+\cdots+n_d\leq \beta n$$ since $\lambda_i\geq 1$ for all $1\leq i\leq d$. Thus
\begin{align}
    l_R\left(\faktor{A_n}{K_{\beta n}\cap A_n}\right) &= \sum\limits_{\substack{(n_1,\ldots,n_d)\in \mathbb{N}^d \\ n_1\lambda_1+\cdots+n_d\lambda_d < \beta n}}\dim_{R/m_R}\left(\dfrac{A_n\cap K_{n_1\lambda_1+\cdots+n_d\lambda_d}}{A_n\cap K^+_{n_1\lambda_1+\cdots+n_d\lambda_d}}\right) = \sum\limits_{t=1}^l \#\Gamma_n^{(t)},\label{eq2}\\
    l_R\left(\faktor{\overline{A}_n}{K_{\beta n}\cap \overline{A}_n}\right) &= \sum\limits_{\substack{(n_1,\ldots,n_d)\in \mathbb{N}^d \\ n_1\lambda_1+\cdots+n_d\lambda_d < \beta n}}\dim_{R/m_R}\left(\dfrac{\overline{A}_n\cap K_{n_1\lambda_1+\cdots+n_d\lambda_d}}{\overline{A}_n\cap K^+_{n_1\lambda_1+\cdots+n_d\lambda_d}}\right) = \sum\limits_{t=1}^l \#\overline{\Gamma}_n^{(t)}.\label{eq3}
\end{align}
The proof of Lemma $\ref{semigroup}$ is similar to the proof of Lemma $4.4.$ in \cite{DC6}.
\begin{lemma}\label{semigroup}
Suppose that $t\geq 1, 0\neq f\in A_i, 0\neq g\in A_j$ and $$\dim _{R/m_R}\left(\dfrac{A_i\cap K_{\nu(f)}}{A_i\cap K^+_{\nu(f)}}\right)\geq t.$$ Then $$\dim _{R/m_R}\left(\dfrac{A_{i+j}\cap K_{\nu(fg)}}{A_{i+j}\cap K^+_{\nu(fg)}}\right)\geq t.$$ Suppose that $t\geq 1, 0\neq f\in \overline{A}_i, 0\neq g\in \overline{A}_j$ and $$\dim _{R/m_R}\left(\dfrac{\overline{A}_i\cap K_{\nu(f)}}{\overline{A}_i\cap K^+_{\nu(f)}}\right)\geq t.$$ Then $$\dim _{R/m_R}\left(\dfrac{\overline{A}_{i+j}\cap K_{\nu(fg)}}{\overline{A}_{i+j}\cap K^+_{\nu(fg)}}\right)\geq t.$$
\end{lemma}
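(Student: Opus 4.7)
The plan is to exhibit, for each fixed nonzero $g \in A_j$, an injective $R/m_R$-linear map between the two quotient spaces induced by multiplication by $g$. This single construction yields the dimension inequality immediately, and the same construction handles the $\overline{A}$ statement after one small extra observation.

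First I would exploit that $B$ is a domain, so $\nu$ is multiplicative on nonzero elements and $\nu(hg) = \nu(h) + \nu(g)$ for every nonzero $h \in A_i$. Combined with the grading $A_i \cdot A_j \subset A_{i+j}$, this shows that $h \mapsto hg$ sends $A_i \cap K_{\nu(f)}$ into $A_{i+j} \cap K_{\nu(fg)}$ and $A_i \cap K^+_{\nu(f)}$ into $A_{i+j} \cap K^+_{\nu(fg)}$. Hence multiplication by $g$ induces a well-defined $R$-linear map $\mu_g$ between the two quotients. Since $\nu$ dominates $B_{m_B}$, every $r \in m_R$ satisfies $\nu(r) > 0$, so $m_R$ acts as zero on each quotient and $\mu_g$ is in fact $R/m_R$-linear.

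Next I would verify injectivity of $\mu_g$. If $h \in A_i \cap K_{\nu(f)}$ and $hg \in A_{i+j} \cap K^+_{\nu(fg)}$, then $\nu(h) + \nu(g) = \nu(hg) > \nu(fg) = \nu(f) + \nu(g)$ forces $\nu(h) > \nu(f)$, that is $h \in A_i \cap K^+_{\nu(f)}$. So $\mu_g$ is injective, proving the first assertion.

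For the statement about $\overline{A}$, the argument is identical once one checks that $\overline{A}_i \cdot \overline{A}_j \subset \overline{A}_{i+j}$: if $h \in (m_R^{ci}B) \cap A_i$ and $g \in (m_R^{cj}B) \cap A_j$, then $hg$ lies in both $A_{i+j}$ and $m_R^{c(i+j)}B$, hence in $\overline{A}_{i+j}$. Substituting $\overline{A}$ for $A$ in the previous paragraphs gives the second assertion. I anticipate no real obstacle in this proof: it is bookkeeping with the valuation and the grading, resting crucially on $B$ being a domain so that $\nu$ is additive on products, a hypothesis arranged at the outset of Theorem \ref{main}.
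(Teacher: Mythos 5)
Your proof is correct and takes essentially the same approach as the paper: the paper fixes $t$ elements $f_1,\ldots,f_t$ representing a linearly independent family and shows that $gf_1,\ldots,gf_t$ stay linearly independent in the target quotient, which is exactly the statement that your multiplication map $\mu_g$ is injective. The only difference is presentational (an explicit injection between quotients versus linear independence of a pushed-forward family), and your verification of $\overline{A}_i \cdot \overline{A}_j \subset \overline{A}_{i+j}$ is a useful detail the paper leaves implicit in the word ``similar.''
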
\begin{proof}
We shall only prove the first assertion since the proof of the second assertion is similar to the first one. There exist $f_1,\ldots,f_t \in A_i\cap K_{\nu(f)}$ such that their classes are $\faktor{R}{m_R}$-linearly independent in $\faktor{A_i\cap K_{\nu(f)}}{A_i\cap K^+_{\nu(f)}}.$ We shall show that the classes of $gf_1,\ldots,gf_t$ in $\faktor{A_{i+j}\cap K_{\nu(fg)}}{A_{i+j}\cap K^+_{\nu(fg)}}$ are linearly independent over $\faktor{R}{m_R}$. Suppose that there exist $a_1,\ldots,a_t\in \faktor{R}{m_R}$ such that the class of $a_1gf_1+\cdots+a_tgf_t$ in $\faktor{A_{i+j}\cap K_{\nu(fg)}}{A_{i+j}\cap K^+_{\nu(fg)}}$ is zero. Then
\begin{align*}
    \nu\left(a_1gf_1+\cdots+a_tgf_t\right)&>\nu(fg)\\
    \implies \nu\left(a_1f_1+\cdots+a_tf_t\right)&>\nu(f).
\end{align*}
Therefore the class of $a_1f_1+\cdots+a_tf_t$ in $\faktor{A_i\cap K_{\nu(f)}}{A_i\cap K^+_{\nu(f)}}$ is zero and linear independence gives us that $a_1=\cdots=a_t=0.$
\end{proof}
The proof of Proposition $\ref{semiprop}$ is similar to the proof of Proposition $6.6.$ in \cite{DC1}.
\begin{proposition}\label{semiprop}
Suppose that $t\geq 1, \Gamma^{(t)}\not\subset \{0\}$ and $\overline{\Gamma}^{(t)}\not\subset \{0\}$. Then
\begin{itemize}
    \item[(i)] $\Gamma^{(t)}$ is a subsemigroup of $\mathbb{N}^{d+1}$.
    \item[(ii)] $m\left(\Gamma^{(t)}\right)=1.$
    \item[(iii)] $\Gamma^{(t)}$ is strongly nonnegative with $q\left(\Gamma^{(t)}\right)\leq p.$
    \item[(iv)] $\overline{\Gamma}^{(t)}$ is a subsemigroup of $\mathbb{N}^{d+1}$.
    \item[(v)] $m\left(\overline{\Gamma}^{(t)}\right)=1.$
    \item[(vi)] $\overline{\Gamma}^{(t)}$ is strongly nonnegative with $q\left(\overline{\Gamma}^{(t)}\right)\leq p.$
\end{itemize}
\end{proposition}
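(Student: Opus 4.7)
(i)/(iv): Given $(n_1,\ldots,n_d,n), (n_1',\ldots,n_d',n') \in \Gamma^{(t)}$, I would lift each to $f \in A_n$ with $\nu(f) = \sum n_i\lambda_i$ and $g \in A_{n'}$ with $\nu(g) = \sum n_i'\lambda_i$ (possible since $\dim \geq t \geq 1$), and apply Lemma \ref{semigroup} to obtain the dimension bound at $\nu(fg) = \sum(n_i+n_i')\lambda_i$ in $A_{n+n'}$; the inequality $\sum n_i \leq \beta n$ is preserved componentwise. The same argument with $\overline{A}$ and the second part of Lemma \ref{semigroup} gives (iv).

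(ii)/(v): Any nonzero $(n_1,\ldots,n_d,n_0) \in \Gamma^{(t)}$ has $n_0 > 0$ (else $\sum n_i \leq 0$ forces $(n_1,\ldots,n_d) = 0$). Fixing a lift $f_0 \in A_{n_0}$, closure under addition from (i) gives $(kn_1,\ldots,kn_d,kn_0) \in \Gamma^{(t)}$ for every $k \geq 1$ with lift $f_0^k$. Picking $a \in A_1 \setminus \{0\}$ (which exists by hypothesis) with $\nu(a) = \sum \mu_i \lambda_i$ and iterating Lemma \ref{semigroup} yields $\dim \geq t$ at $\nu(af_0^k) = \nu(a) + k\nu(f_0)$ in $A_{1+kn_0}$. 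By (\ref{eq2}) the hypothesis $\Gamma^{(t)} \not\subset \{0\}$ forces a representative with $\sum n_i\lambda_i < \beta n_0$ strictly, so $\sum n_i < \beta n_0$, and therefore the constraint $\sum(\mu_i+kn_i) \leq \beta(1+kn_0)$ holds for $k$ large enough; this places $(\mu_1+kn_1,\ldots,\mu_d+kn_d,1+kn_0) \in \Gamma^{(t)}$, and $\gcd(kn_0, 1+kn_0) = 1$ gives $m(\Gamma^{(t)}) = 1$. For (v) I would repeat with $b = ua \in \overline{A}_1$ in place of $a$, where $u \in m_R^c \setminus \{0\}$; the element $b$ is nonzero because $B$ is a domain, $R$ is necessarily not a field (otherwise $\overline{\Gamma}^{(t)} \subset \{0\}$ contradicts the hypothesis), and $f_0^k \in \overline{A}_{kn_0}$ follows from $(m_R^{cn_0})^k \subset m_R^{ckn_0}$.

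(iii)/(vi): By (\ref{eq2}), $\#\Gamma^{(t)}_n \leq l_R(A_n/(K_{\beta n}\cap A_n))$, so I must bound the right side by $\gamma n^p$. My plan is to apply (\ref{limsup}) not with the fixed $c$ but with the enlarged value $c' := \alpha(c+1)$: setting $\overline{A}_n^{c'} := (m_R^{c'n}B)\cap A_n$, I claim $\overline{A}_n^{c'} \subset K_{\beta n} \cap A_n$. Since $R \hookrightarrow S$ and $S$ dominates $B_{m_B}$, we have $m_R \subset m_S$; for $r \in m_R^{c'n}$, then $r \in m_S^{c'n}$, and its expansion in $y_1,\ldots,y_d$ combined with $\lambda_i \geq 1$ gives $\nu(r) \geq c'n = \beta n$. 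Hence any $f = \sum r_i b_i \in m_R^{c'n}B$ satisfies $\nu(f) \geq \min_i(\nu(r_i)+\nu(b_i)) \geq \beta n$. Therefore $l_R(A_n/(K_{\beta n}\cap A_n)) \leq l_R(A_n/\overline{A}_n^{c'}) < \gamma_{c'}n^p$; the analogous bound on $\#\overline{\Gamma}^{(t)}_n$ follows from $\overline{A}_n/(K_{\beta n}\cap \overline{A}_n) \hookrightarrow A_n/(K_{\beta n}\cap A_n)$. Combined with $m = 1$, Theorem \ref{Okounkov2} applied to the sequence $n_i = i$ gives strong nonnegativity and $q \leq p$.

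The hard part will be the growth estimate in (iii)/(vi), which requires exploiting the full universality of hypothesis (\ref{limsup}) over all positive integers (not just the given $c$) and tracking $\nu$ via the domination chain $R \subset B_{m_B} \subset S$ to get the estimate $\nu(r) \geq c'n$ for $r \in m_R^{c'n}$; the semigroup and index arguments, by contrast, are essentially direct consequences of Lemma \ref{semigroup} once the strict inequality $\sum n_i\lambda_i < \beta n$ is available via (\ref{eq2}).
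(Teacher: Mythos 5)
Your proof is correct overall, but for parts (ii) and (v) you take a genuinely different route from the paper, and it is worth comparing the two. The paper observes that, after fixing a nonzero $(n_1,\ldots,n_d,i)\in\Gamma^{(t)}$ witnessed by $f\in A_i$ and picking any $0\neq g\in A_1$ with $\nu(g)=\sum m_j\lambda_j$, one may \emph{increase $\beta$ if necessary} so that $\sum m_j\leq\beta$; this is harmless since all earlier inclusions (e.g.\ $K_{\beta n}\cap A_n\subset(m_R^{cn}B)\cap A_n$) only require $\beta\geq\alpha(c+1)$ and are monotone in $\beta$. Then $\Gamma^{(t)}_{i+1}\neq\emptyset$ by Lemma~\ref{semigroup} and $m(\Gamma^{(t)})=1$ follows from consecutive nonempty graded pieces. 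Your argument instead takes powers $f_0^k$, multiplies by a degree-one element $a$, and invokes $\gcd(kn_0,1+kn_0)=1$. This works, but hinges on having the \emph{strict} inequality $n_1+\cdots+n_d<\beta n_0$ so that for $k\gg 0$ the constraint $\sum(\mu_j+kn_j)\leq\beta(1+kn_0)$ is satisfied. You extract this strict inequality ``by (\ref{eq2}),'' which is defensible — (\ref{eq2}) is only consistent if the tuples counted in $\Gamma^{(t)}_n$ in fact satisfy $\sum n_j\lambda_j<\beta n$ — but it leans on an implicit reading of the definition rather than a manipulation you control; if a nonzero element happened to have $\sum n_j=\beta n_0$, your large-$k$ estimate would fail while the paper's ``increase $\beta$'' trick would not. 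This is precisely the gap the paper's device is designed to avoid, so I would recommend adopting it or else explicitly justifying why nonzero elements of $\Gamma^{(t)}$ always satisfy the strict bound. For parts (iii)/(vi) your argument and the paper's are the same in substance: the paper shows $m_B^n\subset K_n\cap B$ (hence $m_R^{\beta n}B\cap A_n\subset m_B^{\beta n}\cap A_n\subset K_{\beta n}\cap A_n$), while you show $m_R^{\beta n}B\subset K_{\beta n}$ directly by tracking $m_R\subset m_S=(y_1,\ldots,y_d)$ through the domination chain — an equivalent computation — and then apply (\ref{limsup}) with exponent $\beta$ followed by Theorem~\ref{Okounkov2}. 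Your handling of $\overline{\Gamma}^{(t)}$ via the injection $\overline{A}_n/(K_{\beta n}\cap\overline{A}_n)\hookrightarrow A_n/(K_{\beta n}\cap A_n)$ is a clean shortcut the paper doesn't bother to spell out.
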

\begin{proof}
We shall only indicate the proof of the statements for $\Gamma^{(t)}$ since the proof of the statements for $\overline{\Gamma}^{(t)}$ are the same. The proof of $(i)$ follows from Lemma $\ref{semigroup}$.
\par By assumption $\Gamma_i^{(t)}\neq\emptyset$ for some $i\geq 1$. Thus there exists $0\neq f\in A_i$ such that $$\nu(f) = n_1\lambda_1+\cdots+n_d\lambda_d$$ with $n_1+\cdots+n_d\leq \beta i$ and $$\dim _{R/m_R}\left(\dfrac{A_i\cap K_{\nu(f)}}{A_i\cap K^+_{\nu(f)}}\right)\geq t.$$ By assumption, there exists $0\neq g\in A_1$. Let $$\nu(g) = m_1\lambda_1+\cdots+m_d\lambda_d.$$ After increasing $\beta$ if necessary, we may assume that $m_1+\cdots+m_d\leq \beta$. Thus $$\nu(fg) = \nu(f) + \nu(g) = (m_1+n_1)\lambda_1+\cdots+(m_d+n_d)\lambda_d$$ with $(m_1+n_1)+\cdots+(m_d+n_d)\leq \beta(i+1)$. So $\Gamma_{i+1}^{(t)}\neq \emptyset$ by Lemma $\ref{semigroup}$ and thereby proving $(ii)$.
\par Note that $m_B^n \subset K_{n}\cap B$ for all $n\in \mathbb{N}$ since $\lambda_i\geq 1$ for $1\leq i\leq d$ and thus $$\left(m_R^{\beta n}B\right)\cap A_n \subset m_B^{\beta n} \cap A_n \subset K_{\beta n}\cap A_n.$$ Now for all $n\in \mathbb{N}$, we have
\begin{align*}
\#\Gamma_n^{(t)} &\leq l_R\left(\faktor{A_n}{K_{\beta n}\cap A_n}\right)  &&(\text{from $(\ref{eq2})$})\\
&\leq l_R\left(\faktor{A_n}{\left(m_R^{\beta n}B\right)\cap A_n}\right)\\
&\leq \gamma_{\beta}n^p &&(\text{from $(\ref{limsup})$}).
\end{align*}
The conclusions of $(iii)$ now follow from Theorem \ref{Okounkov2}.
\end{proof}
It thus follows from Theorem \ref{Okounkov1} that the limits $$\lim\limits_{n\to \infty}\dfrac{\#\Gamma_n^{(t)}}{n^p}\;\mathrm{and}\;\lim\limits_{n\to \infty}\dfrac{\#\overline{\Gamma}_n^{(t)}}{n^p}$$ exist. From the previous computations, we also get that
\begin{align*}
    \dfrac{l_R\left(\faktor{A_n}{(m_R^{cn}B)\cap A_n}\right)}{n^p} &= \dfrac{l_R\left(\faktor{A_n}{K_{\beta n}\cap A_n}\right)}{n^p} - \dfrac{l_R\left(\faktor{\overline{A}_n}{K_{\beta n}\cap \overline{A}_n}\right)}{n^p} \quad (\text{by $(\ref{eq1})$}) \\
    &= \sum\limits_{t=1}^l \dfrac{\#\Gamma_n^{(t)}}{n^p} - \sum\limits_{t=1}^l \dfrac{\#\overline{\Gamma}_n^{(t)}}{n^p} \quad (\text{by $(\ref{eq2})$ and $(\ref{eq3})$}).
\end{align*}
We now get the conclusions of Theorem \ref{main} by taking $n\to\infty$.
\begin{lemma}\label{artificial}
Suppose that $R$ is a local ring, $B = \bigoplus\limits_{n\geq 0}B_n$ is a graded $R$-algebra with $B_0 = R$ and $A = \bigoplus\limits_{n\geq 0}A_n$ is a graded $R$-subalgebra of $B$. Suppose that $P$ is a homogeneous prime ideal of $B$. Then for any $n\geq 1$, $$\faktor{A_n}{P\cap A_n} = 0 \iff \left(\faktor{A_n}{P\cap A_n}\right)_{P\cap R} = 0.$$
\end{lemma}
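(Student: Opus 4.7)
The plan is straightforward since this lemma is essentially a direct consequence of the definition of a prime ideal and the meaning of localization. The forward implication is immediate from the general fact that the localization of the zero module is zero. The content therefore lies entirely in the reverse implication, and I would prove it by an element-chasing argument. Set $\mathfrak{p} = P\cap R$; note that $\mathfrak{p}$ is a prime ideal of $R$, because $R\hookrightarrow B$ and the contraction of a prime ideal along a ring homomorphism is prime.

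For the reverse direction, suppose $\bigl(A_n/(P\cap A_n)\bigr)_{\mathfrak{p}} = 0$ and let $a\in A_n$ be arbitrary. I would show $a\in P\cap A_n$, so that its class in the quotient is zero. The vanishing of the localization says that the image of $a$ in $\bigl(A_n/(P\cap A_n)\bigr)_{\mathfrak{p}}$ is zero, hence there exists $s\in R\setminus \mathfrak{p}$ with $sa\in P\cap A_n$, in particular $sa\in P$. Because $s\in R$ and $s\notin P\cap R = \mathfrak{p}$, we have $s\notin P$. Primality of $P$ in $B$ then forces $a\in P$, so $a\in P\cap A_n$. Since $a\in A_n$ was arbitrary, $A_n/(P\cap A_n) = 0$, as required.

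I do not anticipate any real obstacle: the homogeneity of $P$ plays no role, nor does the graded structure on $A$ or $B$, and the hypothesis $B_0=R$ is used only to make the contraction $P\cap R$ meaningful as a prime of the coefficient ring. The statement is essentially a restatement of the defining property of a prime ideal, packaged for use in the graded setting of the surrounding theorem.
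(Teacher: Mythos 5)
Your proof is correct and takes essentially the same route as the paper: both reduce the reverse direction to the observation that an element $s\in R\setminus(P\cap R)$ lies outside $P$, so $sa\in P$ together with primality of $P$ forces $a\in P$. The paper phrases this as a contradiction using the fact that $A/(P\cap A)\hookrightarrow B/P$ is a domain, while you argue directly with an arbitrary element, but the mathematical content is identical, and your aside that homogeneity of $P$ is not actually used is accurate.
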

\begin{proof}
Localisation of a zero module is zero which proves one direction. Now suppose that $$\left(\faktor{A_n}{P\cap A_n}\right)_{P\cap R}=0$$ and there exists a non-zero element $x\in \faktor{A_n}{P\cap A_n}.$ Then there exists an element $f\in R\setminus (P\cap R)$ such that $fx = 0$ in $\faktor{A_n}{P\cap A_n}.$ Note that $\faktor{A_n}{P\cap A_n} \subset \faktor{A}{P\cap A}$ which is a domain. Now $f\in R\setminus (P\cap R) \implies f\notin P.$ So the class $\Bar{f}$ of $f$ in $\faktor{R}{P\cap R}\subset \faktor{B}{P}$ is nonzero and therefore $x=0$ which is a contradiction.
\end{proof}
The later part of the proof of Theorem $\ref{main2}$, from $(\ref{similar})$ onwards is similar to the proof of Theorem $6.7.$ in \cite{DC1}. 
\begin{theorem}\label{main2}
Suppose that $R$ is an excellent local ring with maximal ideal $m_R$, $B =\bigoplus\limits_{n\geq 0}B_n$ is a reduced standard graded $R$-algebra and $A = \bigoplus\limits_{n\geq 0}A_n$ is a graded $R$-subalgebra of $B$. Suppose that if $P$ is a minimal prime ideal of $B$ (which is necessarily homogeneous) then $P\cap R \neq m_R$ and if $\faktor{A_1}{P\cap A_1} = 0$ then $\faktor{A_n}{P\cap A_n} = 0$ for all $n\geq 1$. Further suppose that there exists $p\in\mathbb{Z}_{\geq 1}$ such that for all $c\in\mathbb{Z}_{\geq 1}$, there exists $\gamma_c\in\mathbb{R}_{>0}$ such that $$l_R\left(\faktor{A_n}{(m_R^{cn}B)\cap A_n}\right)<\gamma_c n^p$$ for all $n\geq 0$. Then for any fixed integer $c$, $$\lim_{n\to\infty}\dfrac{l_R\left(\faktor{A_n}{(m_R^{cn}B)\cap A_n}\right)}{n^p}$$ exists.
\end{theorem}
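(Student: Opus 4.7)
The plan is to reduce Theorem \ref{main2} to Theorem \ref{main} in two stages: complete $R$ along $m_R$, then decompose the resulting reduced algebra along its minimal primes. For the first stage, since $B$ is reduced standard graded with $B_0 = R$, I present $B = R[x_1,\ldots,x_r]/I$ for a radical homogeneous ideal $I$ with $I \cap R = (0)$. Set $C = \hat{R}[x_1,\ldots,x_r]/I\hat{R}[x_1,\ldots,x_r]$ and let $A' := A \otimes_R \hat{R}$, a graded $\hat{R}$-subalgebra of $C = B \otimes_R \hat{R}$. Faithful flatness of $\hat{R}$ over $R$ together with $m_R \hat{R} = m_{\hat{R}}$ yields
$$l_R\!\left(\frac{A_n}{m_R^{cn} B \cap A_n}\right) = l_{\hat{R}}\!\left(\frac{A'_n}{m_{\hat{R}}^{cn} C \cap A'_n}\right),$$
so I may replace $(R,B,A)$ by $(\hat{R},C,A')$ and assume $R$ is complete throughout.

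For the decomposition stage, Lemma \ref{comp2} gives $I\hat{R}[x_1,\ldots,x_r] = \bigcap_{i=1}^t P_i$ with each $P_i$ homogeneous prime and $C/P_i$ a standard graded domain over the complete local domain $U_i := \hat{R}/(P_i \cap \hat{R})$. Going-down for the flat map $R[x_1,\ldots,x_r] \hookrightarrow \hat{R}[x_1,\ldots,x_r]$ shows that each $P_i$ contracts to a minimal prime of $B$, so the hypothesis forces $P_i \cap \hat{R} \neq m_{\hat{R}}$ and hence $m_{U_i} \neq 0$. The assumption on $A_1/(P \cap A_1)$ transfers to $A'$ and ensures, for each $i$, either $A^{(i)}_1 \neq 0$ or $A^{(i)} = 0$, where $A^{(i)} := (A' + P_i)/P_i \subset C/P_i$. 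For each $i$ with $A^{(i)} \neq 0$, the surjection $A'_n/(m_{\hat{R}}^{cn}C \cap A'_n) \twoheadrightarrow A^{(i)}_n/(m_{U_i}^{cn}(C/P_i) \cap A^{(i)}_n)$ transfers the polynomial length bound. Since $U_i/m_{U_i} = R/m_R$, length over $U_i$ agrees with length over $R$, and Theorem \ref{main} applied to $(U_i, C/P_i, A^{(i)})$ yields
$$\ell_i := \lim_{n\to\infty}\frac{1}{n^p}\, l_R\!\left(\frac{A^{(i)}_n}{m_{U_i}^{cn}(C/P_i) \cap A^{(i)}_n}\right).$$

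It remains to express $\lim_n L_n/n^p$, with $L_n := l_R(A'_n/(m_{\hat{R}}^{cn}C \cap A'_n))$, in terms of the $\ell_i$. I propose induction on $t$, the number of minimal primes of $C$; the base case $t = 1$ is Theorem \ref{main} directly. For the inductive step, set $P := P_t$ and use the short exact sequence
$$0 \to \frac{(P \cap A'_n) + (m_{\hat{R}}^{cn}C \cap A'_n)}{m_{\hat{R}}^{cn}C \cap A'_n} \to \frac{A'_n}{m_{\hat{R}}^{cn}C \cap A'_n} \to \frac{A'_n}{(P \cap A'_n) + (m_{\hat{R}}^{cn}C \cap A'_n)} \to 0.$$
The rightmost quotient surjects onto $A^{(t)}_n/(m_{U_t}^{cn}(C/P) \cap A^{(t)}_n)$, whose normalised limit is $\ell_t$, while the leftmost term is controlled inductively by viewing $A' \cap P$ inside the reduced algebra $C/\bigcap_{i<t} P_i$, which has only $t-1$ minimal primes. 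The main obstacle I anticipate is the kernel of that surjection: the image of $m_{\hat{R}}^{cn}C \cap A'_n$ in $C/P$ may be strictly smaller than $m_{U_t}^{cn}(C/P) \cap A^{(t)}_n$. Following the approach of Theorem $6.7$ of \cite{DC1} (as indicated in the remark preceding Theorem \ref{main2}), the delicate point is to show that this discrepancy contributes at most $o(n^p)$, so that the normalised limits recombine cleanly and the inductive step closes.
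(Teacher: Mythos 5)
Your completion step is essentially the paper's first move, and your identification of the right decomposition (minimal primes $P_i$ of the completed algebra, standard graded domains over complete local domains $U_i$) also matches. The gap is in the third stage, and you have honestly flagged it yourself: the image of $m_{\hat R}^{cn}C \cap A'_n$ in $C/P_t$ need not equal $m_{U_t}^{cn}(C/P_t)\cap A^{(t)}_n$, so your short exact sequence only gives a surjection onto the module whose normalized length tends to $\ell_t$, with an uncontrolled kernel, and likewise on the left-hand term. You hope the discrepancy is $o(n^p)$; there is no reason it should be without a new idea, and your induction does not close.

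The paper does not try to show the discrepancy is negligible. Instead it \emph{changes the comparison ideal} so that no discrepancy arises. Set $C = \bigoplus_{i=1}^t B/P_i$ with the diagonal injection $\varphi\colon B\hookrightarrow C$, and put $\omega_n := \varphi^{-1}(m_R^n C) = \bigcap_{i=1}^t (m_R^n + P_i)B$. The Artin--Rees lemma gives $k$ with $m_R^n B \subset \omega_n \subset m_R^{n-k}B$ for $n\geq k$; taking $\beta=(k+1)c$ yields $\omega_{\beta n}\subset m_R^{cn}B$, and the additivity of length on the chain $\omega_{\beta n}\cap A_n \subset m_R^{cn}B\cap A_n \subset A_n$ reduces the problem to showing that both $l_R(A_n/(\omega_{\beta n}\cap A_n))$ and $l_R\bigl((m_R^{cn}B\cap A_n)/(\omega_{\beta n}\cap m_R^{cn}B\cap A_n)\bigr)$, after dividing by $n^p$, have limits. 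For the first of these one uses the filtration $L^0_n = A_n \supset L^1_n \supset \cdots \supset L^t_n = \omega_{\beta n}\cap A_n$ with $L^j_n = \bigl(\bigcap_{i\leq j}(m_R^{\beta n}+P_i)B\bigr)\cap A_n$. Precisely because $L^{j+1}_n$ is the preimage in $L^j_n$ of $m_R^{\beta n}C^{j+1}_n$ under $B\twoheadrightarrow C^{j+1}:=B/P_{j+1}$, the successive quotient $L^j_n/L^{j+1}_n$ is \emph{isomorphic} (not just surjecting onto) to $\bar A_n/(m_{\bar R}^{\beta n}\bar C_n \cap \bar A_n)$, where $\bar A = \bigoplus_n L^j_n C^{j+1}_0$, $\bar C = C^{j+1}$, $\bar R = R/(P_{j+1}\cap R)$. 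Theorem \ref{main} applies to each of these (once the $\bar A_1 = 0$ case is discharged via Lemma \ref{artificial} and the hypothesis $P_i\cap R\neq m_R$), so the telescoping sum of limits exists. The second term in the additivity formula is handled by running exactly the same argument with $(m_R^{cn}B)\cap A_n$ in place of $A_n$. So the missing idea you need is the intermediate ideal $\omega_{\beta n}$ built to contract exactly from $m_R^{\beta n}C$, together with Artin--Rees to squeeze it between powers of $m_R B$; this replaces your induction with a one-pass filtration whose graded pieces sit cleanly inside the domains $C^{j+1}$ with no error term to estimate.
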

\begin{proof}
Let $c>0$ be a fixed positive integer. We first reduce to the case $R$ is complete. Let $\hat{R}$ denote the $m_R$-adic completion of $R$. Since $B$ is a reduced standard graded $R$-algebra, we can write $$B = \dfrac{R[x_1,\ldots,x_n]}{I} = \bigoplus\limits_{n\geq 0}B_n$$ where $R[x_1,\ldots,x_n]$ is a polynomial ring over $R$ and $I$ is a radical homogeneous ideal in $R[x_1,\ldots,x_n]$ such that $I\cap R = (0)$. Now $$B\otimes_R \hat{R} \cong \dfrac{\hat{R}[x_1,\ldots,x_n]}{I\hat{R}[x_1,\ldots,x_n]} \cong \bigoplus\limits_{n\geq 0}\left(B_n\otimes_{R}\hat{R}\right).$$ Lemma \ref{comp2} gives us that $B\otimes_R\hat{R}$ is a standard graded reduced $\hat{R}$-algebra and the localisation of $B\otimes_R\hat{R}$ at its graded maximal ideal is analytically unramified. Also note that $$ A\otimes_R \hat{R}\cong \bigoplus\limits_{n\geq 0}\hat{A}_n$$ is a graded $\hat{R}$-subalgebra of $B\otimes_R\hat{R}$. Moreover we have the following isomorphisms of modules $$\faktor{(A_n\otimes_R\hat{R})}{(m_{\hat{R}}^{cn}(B\otimes_R\hat{R}))\cap (A_n\otimes_R\hat{R})} \cong \left(\faktor{A_n}{(m_R^{cn}B)\cap A_n}\right)\otimes_R \hat{R} \cong \faktor{A_n}{(m_R^{cn}B)\cap A_n}$$ which shows that the relevant lengths do not change after we pass to completion.
\par Let $Q$ be a minimal prime ideal of $B\otimes_R\hat{R}$. We get a commutative diagram as shown below
\[
  \begin{tikzcd}
     R \arrow{r} \arrow{d} & \hat{R} \arrow{d}\\
     A \arrow{r} \arrow{d} & A\otimes_R\hat{R} \arrow{d}\\
     B \arrow{r} & B\otimes_R\hat{R}
  \end{tikzcd}
\]
where all the arrows are natural maps which are also injective. The horizontal maps in the above diagram are also faithfully flat. This is because the natural map $R \to \hat{R}$ is faithfully flat and base change preserves faithful flatness. In particular, the natural map $B \to B\otimes_R \hat{R}$ is flat and flat maps satisfy the going down property. This shows that $Q$ contracts to a minimal prime ideal of $B$. If $$\faktor{(A_1\otimes_R\hat{R})}{Q\cap (A_1\otimes_R\hat{R})} = 0,$$ then the injection $$\faktor{A_1}{(Q\cap B)\cap A_1} \hookrightarrow \faktor{(A_1\otimes_R\hat{R})}{Q\cap (A_1\otimes_R\hat{R})}$$ implies that $$\faktor{A_1}{(Q\cap B)\cap A_1} = 0.$$ Since $Q\cap B$ is a minimal prime ideal of $B$, the assumptions of the theorem further imply that $$\faktor{A_n}{(Q\cap B)\cap A_n}= 0$$ for all $n\geq 1$. Since $R \to \hat{R}$ is faithfully flat, it gives us that $$\faktor{(A_n \otimes_R \hat{R})}{Q \cap (A_n\otimes_R \hat{R})} \cong \left(\faktor{A_n}{(Q\cap B)\cap A_n}\right)\otimes_R \hat{R} = 0$$ for all $n\geq 1$. Again from the commutativity of the diagram and the assumptions of the theorem, we get $$(Q\cap \hat{R})\cap R = (Q\cap B)\cap R \neq m_R.$$ Therefore we must have that $Q\cap \hat{R} \neq m_{\hat{R}}.$
\par Replacing $R$ with $\hat{R}$, $B$ with $B\otimes_R \hat{R}$ and $A$ with $A\otimes_R \hat{R}$, we can thus assume that $R$ is complete and the localisation of $B$ at its graded maximal ideal is analytically unramified. By lemma \ref{comp2}, there exists a minimal primary decomposition $$(0) = \bigcap\limits_{i=1}^t P_i$$ where $P_i$ are the minimal primes of $B$ (which are necessarily homogeneous). For every $1\leq i\leq t$, let $C^i = \faktor{B}{P_i}.$ Observe that $\faktor{R}{P_i \cap R}$ is a complete local domain, $C^i$ is a standard graded $\faktor{R}{P_i \cap R}$-algebra which is also a domain and lemma \ref{comp2} implies that the localization of $C^i$ at its graded maximal ideal is analytically irreducible. Write $C^i = \bigoplus\limits_{n\geq0}C^i_n$ to denote its graded components. Let
\begin{equation}\label{similar}
    \varphi\colon B \hookrightarrow \bigoplus\limits_{i=1}^t \faktor{B}{P_i} = \bigoplus\limits_{i=1}^t C^i =: C
\end{equation} be the natural homomorphism. $\varphi$ is injective since its kernel is $\bigcap\limits_{i=1}^t P_i = 0.$ By the Artin-Rees lemma, there exists a positive integer $k$ such that
\begin{equation*}
\omega_n := \varphi^{-1}\left(m_R^nC\right) = B\cap m_R^nC \subset m_R^{n-k}B
\end{equation*}
for all $n\geq k$. Thus
\begin{equation*}
m_R^nB \subset \omega_n\subset m_R^{n-k}B
\end{equation*}
for all $n\geq k$. We have that $$\omega_n = \varphi^{-1}\left(\bigoplus\limits_{i=1}^t m_R^n C^i\right) = \bigcap\limits_{i=1}^t \left(m_R^n + P_i\right)B.$$ Let $\beta = (k+1)c.$ We have that $$\omega_{\beta n}\subset m_R^{c(k+1)n - k}B\subset m_R^{cn}B$$ for all $n\geq 1$. Thus
\begin{equation}\label{alpha_1}
l_R\left(\faktor{A_n}{(m_R^{cn}B)\cap A_n}\right) = l_R\left(\faktor{A_n}{\omega_{\beta n}\cap A_n}\right) - l_R\left(\faktor{(m_R^{cn}B)\cap A_n}{\omega_{\beta n}\cap (m_R^{cn}B)\cap A_n}\right)
\end{equation}
for all $n\geq 1$. Define the $R$-modules as follows:
\[
  L^j_n = \left.
  \begin{cases}
    R,  &\text{for } 0\leq j\leq t, \; n=0 \\
    A_n, &\text{for } j =0,\; n\geq 1\\
    \left(\bigcap\limits_{i=1}^j \left(m_R^{\beta n} + P_i\right)B\right)\cap A_n, &\text{for } 1\leq j\leq t, \; n\geq 1
  \end{cases}
\right.
\]
Then $$L^j := \bigoplus\limits_{n\geq 0}L^j_n$$ is a graded $R$-subalgebra of $B$. For $0\leq j\leq t-1$ and $n\geq 1$, we have isomorphisms of $R$-modules
\begin{equation*}
\faktor{L^j_n}{L^{j+1}_n} \cong \faktor{L^j_n C^{j+1}_0}{(L^j_n C^{j+1}_0)\cap m_R^{\beta n}C^{j+1}_n}
\end{equation*}
where $L^j_n C^{j+1}_0$ is the image of $L^j_n$ in $C^{j+1}_n$
and $$L^t_n \cong \omega_{\beta n}\cap A_n.$$ Thus
\begin{align}
l_R\left(\faktor{A_n}{\omega_{\beta n}\cap A_n}\right) &= \sum\limits_{j=0}^{t-1}l_R\left(\faktor{L^j_n}{L^{j+1}_n}\right)\nonumber\\
&= \sum\limits_{j=0}^{t-1}l_{\left(\faktor{R}{P_{j+1}\cap R}\right)}\left(\faktor{L^j_n C^{j+1}_0}{(L^j_n C^{j+1}_0)\cap m_R^{\beta n}C^{j+1}_n}\right).\label{alpha_2}
\end{align}
For some fixed $j$ with $0\leq j\leq t-1$, let $$\Bar{R} = \dfrac{R}{P_{j+1}\cap R},\quad \Bar{C} = C^{j+1} = \bigoplus\limits_{n\geq 0}C^{j+1}_n,\quad \Bar{A}_n = L^j_n C^{j+1}_0, \quad \Bar{A} = \bigoplus\limits_{n\geq 0}L^j_n C^{j+1}_0 = \bigoplus\limits_{n\geq 0}\Bar{A}_n.$$ We remind ourselves that $\Bar{R}$ is a complete local domain, $\Bar{C}$ is a standard graded $\Bar{R}$-algebra which is also a domain and the localisation of $\Bar{C}$ at its graded maximal ideal is analytically irreducible. Moreover $\Bar{A}$ is a graded $\Bar{R}$-subalgebra of $\Bar{C}$. Also note that $$l_{\Bar{R}}\left(\faktor{\Bar{A}_n}{m_{\Bar{R}}^{\beta n}\Bar{C}_n\cap \Bar{A}_n}\right)\leq l_R\left(\faktor{A_n}{\omega_{\beta n}\cap A_n}\right)\leq l_R\left(\faktor{A_n}{m_R^{\beta n}B\cap A_n}\right)<\gamma_{\beta}n^p.$$ We shall now establish that if $\Bar{A}_1 = 0$ then $\Bar{A}_n = 0$ for all $n\geq 1$. If $j=0$, then our claim follows from the hypothesis. Suppose that $\Bar{A}_1 = 0$ and $1\leq j\leq t-1$. Then $$m_R^{\beta}A_1 \subset P_{j+1} \implies m_R^{\beta n}A_n \subset P_{j+1} \implies (A_n)_{P_{j+1}\cap R} \subset (P_{j+1}\cap A_n)_{P_{j+1}\cap R}$$ as $P_{j+1}\cap R \neq m_R.$ This implies that $\left(\faktor{A_n}{P_{j+1}\cap A_n}\right)_{P_{j+1}\cap R} = 0$, so $\faktor{A_n}{P_{j+1}\cap A_n} = 0$ by Lemma \ref{artificial}, which in turn shows that $\Bar{A}_n = 0$. So we may assume that $\Bar{A}_1 \neq 0$ and by Theorem \ref{main}, $$\lim\limits_{n\to\infty}\dfrac{l_{\Bar{R}}\left(\faktor{\Bar{A}_n}{m_{\Bar{R}}^{\beta n}\Bar{C}_n\cap \Bar{A}_n}\right)}{n^p}$$ exists. Since this limit exists for all $0\leq j\leq t-1$, $$\lim\limits_{n\to\infty}\dfrac{l_R\left(\faktor{A_n}{\omega_{\beta n}\cap A_n}\right)}{n^p}$$ exists by $(\ref{alpha_2})$. The same argument applied to $(m_R^{cn}B)\cap A_n$ (instead of $A_n$) implies that $$\lim\limits_{n\to\infty}\dfrac{l_R\left(\faktor{(m_R^{cn}B)\cap A_n}{\omega_{\beta n}\cap (m_R^{cn}B)\cap A_n}\right)}{n^p}$$ exists, so $$\lim\limits_{n\to\infty}\dfrac{l_R\left(\faktor{A_n}{m_R^{\beta n}B\cap A_n}\right)}{n^p}$$ exists by $(\ref{alpha_1})$.
\end{proof}
The proof of Lemma $\ref{bound1}$ is similar to that of Lemma $6.9.$ in \cite{DC1}.
 \begin{lemma}\label{bound1}
 Suppose that $R$ is a Noetherian local ring with maximal ideal $m_R$, $B =\bigoplus\limits_{n\geq 0}B_n$ is a standard graded $R$-algebra and $A = \bigoplus\limits_{n\geq 0}A_n$ is a standard graded $R$-subalgebra of $B$. Fix a positive integer $c$. Then there exists a constant $\gamma_c\in \mathbb{R}_{>0}$ such that $$l_R\left(\faktor{A_n}{(m_R^{cn}B)\cap A_n}\right)<\gamma_c n^{\dim A -1}$$ for all $n\geq 1$.
 \end{lemma}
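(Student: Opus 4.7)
The plan is to reduce to bounding $l_R(A_n/m_R^{cn}A_n)$ and then to control that length via a bigraded associated graded construction. The reduction is immediate: since $A\subseteq B$ forces $m_R^{cn}A_n\subseteq (m_R^{cn}B)\cap A_n$, there is a natural surjection $A_n/m_R^{cn}A_n\twoheadrightarrow A_n/((m_R^{cn}B)\cap A_n)$, so it suffices to produce a constant $\gamma_c$ with $l_R(A_n/m_R^{cn}A_n)<\gamma_c n^{\dim A-1}$ for all $n\geq 1$.

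The main object is the associated graded algebra $\mathcal{G}:=\mathrm{gr}_{m_R A}(A)=\bigoplus_{k\geq 0}m_R^kA/m_R^{k+1}A$. The inherited $A$-grading makes $\mathcal{G}$ into a standard bigraded Noetherian algebra over the field $R/m_R$, with bidegree $(k,n)$ piece $\mathcal{G}_{k,n}=m_R^kA_n/m_R^{k+1}A_n$, generated over $\mathcal{G}_{0,0}=R/m_R$ by $\mathcal{G}_{1,0}=m_R/m_R^2$ and $\mathcal{G}_{0,1}=A_1/m_RA_1$. The classical equality $\dim\mathrm{gr}_I(T)=\dim T$ for any Noetherian ring $T$ and proper ideal $I$ (obtained by modding the nonzerodivisor $t^{-1}$ out of the extended Rees ring $T[It,t^{-1}]$) then yields $\dim\mathcal{G}=\dim A$.

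By the bivariate Hilbert polynomial theorem for standard bigraded Noetherian algebras over a field, for $(k,n)\gg 0$ there is a polynomial $P(x,y)\in\mathbb{Q}[x,y]$ of total degree at most $\dim\mathcal{G}-2=\dim A-2$ with $\dim_{R/m_R}\mathcal{G}_{k,n}=P(k,n)$. Filtering $A_n$ by $m_R$-powers gives
\begin{equation*}
l_R(A_n/m_R^{cn}A_n)=\sum_{k=0}^{cn-1}\dim_{R/m_R}\mathcal{G}_{k,n},
\end{equation*}
and on the range $k\leq cn-1$ every monomial $k^in^j$ with $i+j\leq\dim A-2$ is at most $c^{\dim A-2}n^{\dim A-2}$; hence $P(k,n)\leq C_c n^{\dim A-2}$ there, and summing over the $cn$ values of $k$ yields $O(n^{\dim A-1})$. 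Boundary contributions from small $n$ and from the finitely many $(k,n)$ where the bivariate polynomial has not yet taken over can be absorbed into the constant (each $\mathcal{G}_{k,\ast}$ is a finitely generated graded $A/m_RA$-module of dimension at most $\dim A$), giving the bound uniformly for $n\geq 1$.

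The main obstacle is securing the sharp total-degree bound $\dim A-2$ on the bivariate Hilbert polynomial of $\mathcal{G}$, which reflects the intrinsic Krull dimension of $A$ rather than any embedding dimension. A naive estimate via $l_R(A_n/m_R^{cn}A_n)\leq l_R(B_n/m_R^{cn}B_n)$ combined with a polynomial presentation $R[z_1,\ldots,z_s]\twoheadrightarrow B$ yields only $O(n^{\dim R+s-1})$, which is strictly weaker than the desired $O(n^{\dim A-1})$ in general; the bigraded analysis is precisely what replaces $\dim R+s$ by $\dim A$ in the exponent.
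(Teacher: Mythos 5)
Your proof is essentially the same as the paper's: both pass to the bigraded associated graded algebra $\mathrm{gr}_{m_R A}(A)=\bigoplus_{k,n}m_R^kA_n/m_R^{k+1}A_n$, realize it as a standard bigraded $R/m_R$-algebra of Krull dimension at most $\dim A$, and extract the $O(n^{\dim A-1})$ bound from a bigraded Hilbert--polynomial statement after observing that $m_R^{cn}A_n\subseteq(m_R^{cn}B)\cap A_n$ reduces the problem to bounding $l_R(A_n/m_R^{cn}A_n)$. The only divergence is that the paper simply cites a theorem (its reference [JO, Theorem~2.4]) for the conclusion that $n\mapsto l_R(A_n/m_R^{cn}A_n)$ is eventually a polynomial of degree at most $\dim A-1$, whereas you re-derive this by invoking the bivariate Hilbert polynomial and summing over $0\leq k\leq cn-1$; your phrase ``the finitely many $(k,n)$ where the bivariate polynomial has not yet taken over'' is slightly misleading (there are infinitely many such pairs, one strip $\{k<N\}$ for each $n$), but the parenthetical remark that each $\mathcal{G}_{k,\ast}$ is a finitely generated graded $A/m_RA$-module of dimension at most $\dim A$ is exactly what is needed to control those strips, so the argument stands. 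Two minor cautions: the equality $\dim\mathrm{gr}_I(T)=\dim T$ you assert for an arbitrary Noetherian $T$ is more delicate than in the local case (the paper is careful to claim only $\dim\mathrm{gr}_{m_RA}(A)\leq\dim A$, which is all the argument requires), and the total-degree bound $\leq\dim\mathcal{G}-2$ for the bivariate Hilbert polynomial is the nontrivial input that really deserves either a precise citation or a proof, since it is exactly the role played by the paper's [JO, Theorem~2.4].
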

 \begin{proof}
 Let $$H = \bigoplus\limits_{i,j\geq 0}\left(\faktor{m_R^i A_j}{m_R^{i+1} A_j}\right).$$ Then $H$ is a bigraded algebra over the field $\kappa := \faktor{R}{m_R}$. Let $a_1,\ldots,a_u$ be the generators of $m_R$ as an $R$-module and let $b_1,\ldots,b_v$ be the generators of $A_1$ as an $R$-module. Let $$S := \kappa [x_1,\ldots,x_u;y_1,\ldots,y_v]$$ be a polynomial ring and $S$ is bigraded by $\deg (x_i) = (1,0)$ and $\deg (y_j) = (0,1)$. The surjective $\kappa$-algebra homomorphism $S \to H$ defined by $$x_i\to [a_i]\in \faktor{m_R}{m_R^2},\quad y_j \to [b_j] \in \faktor{A_1}{m_R A_1}$$ is bigraded, realizing $H$ as a bigraded $S$-module. Moreover $H \cong \mathrm{gr}_{(m_RA)}(A)$, so that $$\dim_S H = \dim H = \dim \left(\mathrm{gr}_{(m_RA)}(A)\right) \leq \dim A.$$ It can now be deduced from \cite[Theorem $2.4$]{JO} that there exists a positive integer $n_0$ such that for all $n\geq n_0$ implies $$l_R\left(\faktor{A_n}{m_R^{cn}A_n}\right) = \sum\limits_{i=0}^{cn-1} \dim_{\kappa}\left(\faktor{m_R^i A_n}{m_R^{i+1} A_n}\right)$$ is a polynomial in $n$ of degree at most $\dim A -1$, from which the conclusions of the lemma follows.
 \end{proof}
 The following lemma follows as in \cite{BJ2}. For the reader's convenience we give a proof.
 \begin{lemma}\label{bound2}
 Suppose that $R$ is a Noetherian local ring with maximal ideal $m_R$, $B =\bigoplus\limits_{n\geq 0}B_n$ is a standard graded $R$-algebra and $A = \bigoplus\limits_{n\geq 0}A_n$ is a standard graded $R$-subalgebra of $B$. Then there exists a constant $\alpha \in \mathbb{R}_{>0}$ such that $$l_R\left(H^0_{m_R}\left(\dfrac{B_n}{A_n}\right)\right)<\alpha n^{\dim B - 1}$$ for all $n\geq 1$.
 \end{lemma}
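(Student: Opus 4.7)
The plan is to bound $l_R(H^0_{m_R}(B_n/A_n))$ by a Hilbert--Samuel-type quantity and then invoke Lemma \ref{bound1} applied to the trivial inclusion $B \subseteq B$, following the scheme of \cite{BJ2}. I would proceed in three stages.

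First, I would establish a uniform linear-in-$n$ annihilator for the torsion: there exists a constant $c \in \mathbb{Z}_{\geq 1}$, depending only on $A \subseteq B$, such that
\[
H^0_{m_R}(B_n/A_n) = (A_n :_{B_n} m_R^{cn})/A_n \qquad \text{for every } n \geq 1.
\]
Equivalently, $m_R^{cn}$ already annihilates the full $m_R$-torsion of $B_n/A_n$ uniformly in $n$. This is a graded uniform Artin--Rees assertion for the pair $A \subseteq B$. A natural approach is via the finitely generated bigraded $\kappa$-algebra $H = \bigoplus_{i,j\geq 0} m_R^iB_j/m_R^{i+1}B_j$ appearing in the proof of Lemma \ref{bound1}; one uses its finite generation over $\kappa = R/m_R$ together with a comparison to the analogous bigraded object for $A$ to extract the uniform bound. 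Alternatively, one can argue via the Rees algebra of $m_R$ acting on $B$, exploiting the finite generation of $B$ as an $R$-algebra.

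Next, given such a $c$, I would establish the length inequality
\[
l_R\bigl(H^0_{m_R}(B_n/A_n)\bigr) \;\leq\; l_R\bigl(B_n/m_R^{cn}B_n\bigr)
\]
for all $n \geq 1$, possibly after mildly enlarging $c$. Since the submodule $(A_n :_{B_n} m_R^{cn})/A_n \subseteq B_n/A_n$ is annihilated by $m_R^{cn}$, the natural map $(A_n :_{B_n} m_R^{cn})/A_n \to B_n/(A_n+m_R^{cn}B_n)$ has image of length at most $l_R(B_n/m_R^{cn}B_n)$; a further analysis of its kernel, which is a subquotient of $m_R^{cn}B_n$, closes the comparison (iterating the argument or invoking the uniform annihilator a second time as needed).

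Finally, I would apply Lemma \ref{bound1} with $B$ in the role of $A$ (i.e., to the inclusion $B \subseteq B$); this produces a constant $\gamma_c \in \mathbb{R}_{>0}$ with $l_R(B_n/m_R^{cn}B_n) < \gamma_c\, n^{\dim B - 1}$ for all $n \geq 1$. Combining the three stages yields $l_R(H^0_{m_R}(B_n/A_n)) < \alpha\, n^{\dim B - 1}$ with $\alpha := \gamma_c$, as desired. The technical heart of the argument is the first stage: uniform, linear-in-$n$ control of the saturation exponent, since without it the stabilization index of the colon chain $(A_n :_{B_n} m_R^k)$ could a priori grow superlinearly in $n$, precluding any comparison to a Hilbert--Samuel quantity of controlled polynomial growth.
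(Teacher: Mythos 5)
Your route is genuinely different from the paper's, but Stage 2 has a real gap. Write $T_n = A_n \colon_{B_n} m_R^{\infty}$. The natural map $T_n/A_n \to B_n/(A_n + m_R^{cn}B_n)$ has kernel exactly $\bigl(T_n \cap m_R^{cn}B_n\bigr)\big/\bigl(A_n \cap m_R^{cn}B_n\bigr)$, and this does \emph{not} vanish merely because $m_R^{cn}T_n \subseteq A_n$ (your Stage 1 conclusion): knowing that $m_R^{cn}$ annihilates $T_n/A_n$ does not tell you that an element of $T_n$ which happens to lie in $m_R^{cn}B_n$ must lie in $A_n$, and enlarging $c$ does not help since $x \in m_R^{2cn}B_n$ does not factor as $m_R^{cn}$ times elements of $T_n$. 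What your argument actually needs is the stronger containment $T_n \cap m_R^{cn}B_n \subseteq A_n$, which is precisely equation $(\ref{lc5})$ in the proof of Theorem \ref{maintheorem}; there it is obtained from Swanson's linear growth of primary decompositions \cite[Theorem 3.4]{IS} applied to powers of $I = A_1 B$, a strictly stronger input than a uniform annihilator. With $(\ref{lc5})$ in hand your comparison map is injective and the bound closes via Lemma \ref{bound1} applied to $B \subseteq B$; without it, "iterating'' or "invoking the uniform annihilator a second time'' cannot bridge the kernel. Your Stage 1 sketch (bigraded finite generation of $H$ plus an unspecified comparison to the corresponding object for $A$) is also too vague to count as a proof even of the weaker annihilator statement; the real engine there is again Swanson's theorem.

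The paper's own proof avoids all of this machinery. It filters $B_n = A_0 B_n \supseteq A_1 B_{n-1} \supseteq \cdots \supseteq A_n B_0 = A_n$, applies subadditivity of $l_R\bigl(H^0_{m_R}(-)\bigr)$ along the resulting short exact sequences, and then recognizes the sum $\sum_{i} l_R\bigl(H^0_{m_R}(A_iB_{n-i}/A_{i+1}B_{n-i-1})\bigr)$ as the Hilbert function (in the internal grading of \cite{BJ}) of $H^0_{m_R}(\mathrm{gr}_{A_1B}B)$, a finitely generated graded module over the standard graded ring $\mathrm{gr}_{A_1B}B$ of dimension $\leq \dim B$. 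This yields the degree $\leq \dim B - 1$ polynomial bound directly, with no appeal to uniform Artin--Rees or to linear growth of primary decompositions. If you wish to pursue your approach, replace Stage 1 by the conclusion packaged as $(\ref{lc5})$ and cite \cite[Theorem 3.4]{IS} for it; but be aware that this imports a substantially heavier tool than the lemma requires.
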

 \begin{proof}
We shall first show that $$l_R\left(H^0_{m_R}\left(\dfrac{B_n}{A_n}\right)\right) \leq  l_R\left(H^0_{m_R}(A_n)\right) + \sum\limits_{i=0}^{n-1}l_R\left(H^0_{m_R}\left(\dfrac{A_i B_{n-i}}{A_{i+1} B_{n-i-1}}\right)\right).$$ For every i, $0\leq i \leq n-2$, there exists a short exact sequence as follows $$0 \to \dfrac{A_{i+1}B_{n-i-1}}{A_n} \to \dfrac{A_{i}B_{n-i}}{A_n} \to \dfrac{A_i B_{n-i}}{A_{i+1}B_{n-i-1}} \to 0.$$ The function $l_R(H^0_{m_R}(-))$ is subadditive on short exact sequences, so we obtain that
\begin{align}
    l_R\left(H^0_{m_R}\left(\dfrac{B_n}{A_n}\right)\right) &\leq l_R\left(H^0_{m_R}\left(\dfrac{B_n}{A_1B_{n-1}}\right)\right) + l_R\left(H^0_{m_R}\left(\dfrac{A_1B_{n-1}}{A_n}\right)\right)\nonumber\\
    &\leq l_R\left(H^0_{m_R}\left(\dfrac{B_n}{A_1B_{n-1}}\right)\right) + l_R\left(H^0_{m_R}\left(\dfrac{A_1 B_{n-1}}{A_2B_{n-2}}\right)\right) + l_R\left(H^0_{m_R}\left(\dfrac{A_2 B_{n-2}}{A_n}\right)\right)\nonumber\\
    &\vdots\nonumber\\
    &\leq \sum\limits_{i=0}^{n-1}l_R\left(H^0_{m_R}\left(\dfrac{A_i B_{n-i}}{A_{i+1} B_{n-i-1}}\right)\right)\nonumber\\
    &\leq l_R\left(H^0_{m_R}(A_n)\right) + \sum\limits_{i=0}^{n-1}l_R\left(H^0_{m_R}\left(\dfrac{A_i B_{n-i}}{A_{i+1} B_{n-i-1}}\right)\right) \label{bound}.
\end{align}
Consider the associated graded ring of $B$ with respect to the ideal $I := A_1B$, i.e.
\begin{align*}
    \mathrm{gr}_I B &:= \bigoplus\limits_{n=0}^{\infty}\dfrac{I^n}{I^{n+1}} = \bigoplus\limits_{n=0}^{\infty} \dfrac{A_nB}{A_{n+1}B}.
\end{align*}
Here $\mathrm{gr}_I B$ is endowed with the "internal grading" as introduced in \cite[$2.3$ and $3.1$]{BJ}, i.e. $$\left(\mathrm{gr}_I B\right)_n =
\begin{cases}
R, &n=0\\
\left(\bigoplus\limits_{i=0}^{n-1} \dfrac{A_i B_{n-i}}{A_{i+1} B_{n-i-1}}\right)\oplus A_n, &n\geq 1
\end{cases}
$$
We now argue as in \cite{BJ}. With this internal grading, $\mathrm{gr}_I B$ becomes a standard graded $R$-algebra and $H^0_{m_R}\left(\mathrm{gr}_I B\right)$ is a finitely generated graded ideal of $\mathrm{gr}_I B$ which is annihilated a power of $m_R$. The sum on the right hand side of (\ref{bound}) gives rise to the Hilbert polynomial of $H^0_{m_R}\left(\mathrm{gr}_I B\right)$ and this polynomial has degree at most $$\dim \left(\mathrm{gr}_I B\right) - 1 \leq \dim B -1,$$ from which the conclusions of the lemma follows. 
\end{proof}
\begin{lemma}\label{dimless}
Suppose that $R$ is a universally catenary local ring with maximal ideal $m_R$, $B =\bigoplus\limits_{n\geq 0}B_n$ is a reduced standard graded $R$-algebra and $A = \bigoplus\limits_{n\geq 0}A_n$ is a standard graded $R$-subalgebra of $B$. Then $\dim A \leq \dim B$.
\end{lemma}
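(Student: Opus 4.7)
My plan is to reduce the problem to a graded inclusion of standard graded \emph{domains} by working modulo the minimal primes of $B$, and then to apply the dimension formula for finitely generated algebras over a universally catenary Noetherian local domain.

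Since $A$ is a subring of the reduced ring $B$, $A$ is also reduced. Let $P_1,\ldots,P_t$ denote the minimal primes of $B$; these are homogeneous because $B$ is a reduced graded ring. Then $\bigcap_i(A\cap P_i)=A\cap\bigcap_iP_i=(0)$, so each minimal prime $\mathfrak{q}$ of $A$ contains the product $\prod_i(A\cap P_i)$ and hence, being prime, contains some $A\cap P_i$; minimality of $\mathfrak{q}$ then forces $\mathfrak{q}=A\cap P_i$. In view of the standard identities $\dim A=\max_{\mathfrak{q}\in\mathrm{Min}(A)}\dim A/\mathfrak{q}$ and $\dim B=\max_i\dim B/P_i$, the lemma reduces to showing $\dim A/(A\cap P_i)\leq\dim B/P_i$ for each $i$.

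Fix such an $i$, and set $R_i=R/(R\cap P_i)$, $A^i=A/(A\cap P_i)$, $B^i=B/P_i$. Then $R_i$ is a universally catenary local domain (since universal catenarity descends to quotients) and $A^i\hookrightarrow B^i$ is a graded inclusion of standard graded $R_i$-algebra domains. Because $R_i$ is universally catenary, the dimension formula applies to $R_i\hookrightarrow A^i$: for every prime $P$ of $A^i$ lying over $\mathfrak{p}=P\cap R_i$,
\[
\mathrm{ht}(P)+\mathrm{trdeg}_{\kappa(\mathfrak{p})}\kappa(P)=\mathrm{ht}(\mathfrak{p})+\mathrm{trdeg}_{K(R_i)}K(A^i).
\]
Evaluated at the homogeneous maximal ideal $\mathfrak{m}_{A^i}=m_{R_i}A^i+A^i_+$, whose residue field coincides with $R_i/m_{R_i}$, this reads $\mathrm{ht}(\mathfrak{m}_{A^i})=\dim R_i+\mathrm{trdeg}_{K(R_i)}K(A^i)$; the same formula evaluated at any maximal ideal not contracting to $m_{R_i}$ returns a strictly smaller height, so $\dim A^i$ equals the right-hand side above. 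The identical computation gives $\dim B^i=\dim R_i+\mathrm{trdeg}_{K(R_i)}K(B^i)$.

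The inclusion $A^i\hookrightarrow B^i$ extends to an injection $K(A^i)\hookrightarrow K(B^i)$ of fraction fields, hence $\mathrm{trdeg}_{K(R_i)}K(A^i)\leq\mathrm{trdeg}_{K(R_i)}K(B^i)$, and therefore $\dim A^i\leq\dim B^i\leq\dim B$. The main obstacle is verifying that the homogeneous maximal ideal realizes the Krull dimension of a standard graded domain over a local ring; the dimension formula delivers precisely this, which is exactly why the universal catenarity hypothesis on $R$ is essential.
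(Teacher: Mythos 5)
Your proof is correct and follows essentially the same strategy as the paper: pass to the quotients by the (homogeneous) minimal primes of $B$ to reduce to graded inclusions of standard graded domains over a universally catenary local domain, then invoke the dimension formula. The only small variation is that you apply the dimension formula twice, to $R_i\hookrightarrow A^i$ and $R_i\hookrightarrow B^i$, and compare transcendence degrees over $K(R_i)$, whereas the paper applies it once directly to the inclusion $A/(A\cap P_i)\hookrightarrow B/P_i$ (using that $A/(A\cap P_i)$ is universally catenary and that the residue field extension at the homogeneous maximal ideals is trivial); in the process you also make explicit the fact that the homogeneous maximal ideal of a standard graded algebra over a local ring attains the Krull dimension, which the paper uses implicitly.
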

\begin{proof}
Let $P_1$,\ldots, $P_t$ be the minimal primes (which are necessarily homogeneous) of $B$. For every $1\leq i\leq t$, let $Q_i = P_i \cap A$. Since $B$ is reduced, we have $$\bigcap\limits_{i=1}^t P_i = 0 \implies \left(\bigcap\limits_{i=1}^t P_i\right)\cap A = 0 \implies \bigcap\limits_{i=1}^t Q_i = 0.$$ This shows that the minimal primes of $A$ appear amongst the primes $Q_1,\ldots,Q_t$. Now observe that for every $1\leq i\leq t$, there are graded inclusions $$\faktor{A}{Q_i} \subset \faktor{B}{P_i}$$ of standard graded $\faktor{R}{P_i \cap R}$-algebras, which are also domains. Let $m_{\faktor{A}{Q_i}}$ (respectively $m_{\faktor{B}{P_i}}$) denote the homogeneous maximal ideal of $\faktor{A}{Q_i}$ (respectively $\faktor{B}{P_i}$). As $\faktor{A}{Q_i}$ is universally catenary, we obtain from the dimension formula \cite{Mat1}[Theorem $23$, page $84$] that
\begin{align*}
    \mathrm{ht}\left(m_{\faktor{B}{P_i}}\right) &= \mathrm{ht}\left(m_{\faktor{A}{Q_i}}\right) + \mathrm{tr.deg.}_{QF\left(\faktor{A}{Q_i}\right)} QF\left(\faktor{B}{P_i}\right)\\
    \implies \dim \faktor{B}{P_i} &= \dim \faktor{A}{Q_i} + \mathrm{tr.deg.}_{QF\left(\faktor{A}{Q_i}\right)} QF\left(\faktor{B}{P_i}\right)\\
    \implies \dim \faktor{B}{P_i} &\geq \dim \faktor{A}{Q_i}.
\end{align*}
Using the definition of Krull dimension and the above inequality, we can conclude that $$\dim A = \max_{1\leq i\leq t} \left\{\dim \faktor{A}{Q_i}\right\} \leq \max_{1\leq i\leq t} \left\{\dim \faktor{B}{P_i}\right\} = \dim B.$$
\end{proof}
\begin{theorem}\label{maintheorem}
Suppose that $R$ is an excellent local ring with maximal ideal $m_R$, $B =\bigoplus\limits_{n\geq 0}B_n$ is a reduced standard graded $R$-algebra and $A = \bigoplus\limits_{n\geq 0}A_n$ is a standard graded $R$-subalgebra of $B$. Also assume that if $P$ is a minimal prime ideal of $B$ then $P\cap R \neq m_R$. Then $$\varepsilon\left(A\mid B\right) = \lim\limits_{n\to \infty}\left( \dfrac{(\dim B - 1)!}{n^{\dim B -1}}l_R\left(\faktor{(A_n\colon_{B_n}m_R^{\infty})}{A_n}\right)\right)$$ exists as a finite limit.
\end{theorem}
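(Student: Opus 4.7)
The plan is to deduce the existence of the limit by applying Theorem~\ref{main2} twice: once to the original inclusion $A \subseteq B$, and once to the $m_R$-saturation of $A$ inside $B$, and then to combine the two applications via a single decomposition identity.

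First I would verify that the hypotheses of Theorem~\ref{main2} are met for $A \subseteq B$. The minimal-prime condition $P \cap R \neq m_R$ is part of the hypothesis of Theorem~\ref{maintheorem}. Since $A$ is standard graded, any homogeneous prime $P$ of $B$ with $A_1/(P \cap A_1) = 0$ (equivalently $A_1 \subseteq P$) satisfies $A_n = A_1^n \subseteq P$, so the ``prime implication'' hypothesis of Theorem~\ref{main2} holds automatically. Lemma~\ref{bound1} combined with Lemma~\ref{dimless} ($\dim A \leq \dim B$) provides the polynomial bound $l_R(A_n/(m_R^{cn}B \cap A_n)) < \gamma_c n^{\dim B - 1}$. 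Theorem~\ref{main2} then yields the existence of $L_c := \lim_n l_R(A_n/(m_R^{cn}B \cap A_n))/n^{\dim B - 1}$ for each $c \geq 1$.

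Next I would introduce the graded $R$-subalgebra $\widetilde{A} \subseteq B$ defined by $\widetilde{A}_n := (A_n :_{B_n} m_R^\infty)$; it is closed under multiplication because $m_R^{c_1}b_1 \subseteq A$ and $m_R^{c_2}b_2 \subseteq A$ force $m_R^{c_1+c_2}b_1b_2 \subseteq A$. The quantity to be controlled is $l_R(\widetilde{A}_n/A_n) = l_R(H^0_{m_R}(B_n/A_n))$. I would apply Theorem~\ref{main2} to $\widetilde{A} \subseteq B$: the polynomial bound $l_R(\widetilde{A}_n/(m_R^{cn}B \cap \widetilde{A}_n)) \leq l_R(\widetilde{A}_n/A_n) + l_R(A_n/(m_R^{cn}B \cap A_n))$ follows from Lemma~\ref{bound2} together with the preceding bound, while the prime-implication hypothesis for $\widetilde{A}$ is inherited from $A$: if $\widetilde{A}_1 \subseteq P$ then $A_n \subseteq P$, and for any $b \in \widetilde{A}_n$ there exists $N$ with $m_R^N b \subseteq A_n \subseteq P$; picking $r \in m_R \setminus P$ (available since $P \cap R \neq m_R$) gives $r^N b \in P$, forcing $b \in P$. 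This produces $\widetilde{L}_c := \lim_n l_R(\widetilde{A}_n/(m_R^{cn}B \cap \widetilde{A}_n))/n^{\dim B - 1}$ for every $c \geq 1$.

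Finally, decomposing $l_R(\widetilde{A}_n/(m_R^{cn}B \cap A_n))$ in two ways via the chains $\widetilde{A}_n \supseteq A_n \supseteq m_R^{cn}B \cap A_n$ and $\widetilde{A}_n \supseteq m_R^{cn}B \cap \widetilde{A}_n \supseteq m_R^{cn}B \cap A_n$ yields the identity
\begin{equation*}
l_R(\widetilde{A}_n/A_n) = l_R(\widetilde{A}_n/(m_R^{cn}B \cap \widetilde{A}_n)) - l_R(A_n/(m_R^{cn}B \cap A_n)) + l_R\!\left(\frac{m_R^{cn}B \cap \widetilde{A}_n}{m_R^{cn}B \cap A_n}\right).
\end{equation*}
The first two terms on the right contribute $\widetilde{L}_c - L_c$ in the limit. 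The hard part will be to control the error term $l_R((m_R^{cn}B \cap \widetilde{A}_n)/(m_R^{cn}B \cap A_n))/n^{\dim B - 1}$; a priori it is only bounded by $l_R(\widetilde{A}_n/A_n)$, which is exactly what we are trying to compute. I expect to show, for $c$ chosen sufficiently large \emph{independently of} $n$, that $m_R^{cn}B \cap \widetilde{A}_n \subseteq A_n$, so the error vanishes identically; the natural tool is a uniform Artin--Rees argument in the excellent-local setting, either via Huneke's uniform Artin--Rees theorem applied to the graded pair $(B,\widetilde{A})$, or by adapting the valuation-theoretic Okounkov-body methods developed in the proof of Theorem~\ref{main}. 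Once this vanishing is established, passing to the limit yields $\varepsilon(A \mid B) = (\dim B - 1)!\,(\widetilde{L}_c - L_c)$, confirming existence as a finite limit.
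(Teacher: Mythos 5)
Your structural approach matches the paper's almost exactly: introduce the saturation algebra $\widetilde{A}_n = (A_n :_{B_n} m_R^\infty)$, check that $A$ and $\widetilde{A}$ both satisfy the hypotheses of Theorem~\ref{main2} (including your verification of the prime-implication condition for $\widetilde{A}$ using $P\cap R\neq m_R$, which is exactly the argument the paper's terse ``again using $A_n=A_1^n$'' glosses over), and combine the two limits. Your observation that the error term $l_R((m_R^{cn}B\cap\widetilde{A}_n)/(m_R^{cn}B\cap A_n))$ is circular unless it vanishes is precisely the right diagnosis.

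The gap is that you do not actually establish the vanishing $m_R^{cn}B\cap\widetilde{A}_n\subseteq A_n$ for $c$ large and independent of $n$; you only flag it as the ``hard part'' and gesture at Huneke's uniform Artin--Rees or an Okounkov-body argument. Neither gesture quite lands. Huneke's uniform Artin--Rees bounds $I^nM\cap N$ inside $I^{n-k}N$ for a \emph{fixed} submodule $N\subseteq M$; here the submodule $A_n\subseteq \widetilde{A}_n$ and the degree $n$ vary simultaneously, and the saturation $\widetilde{A}_n$ is itself defined by a colon ideal, so the naive Artin--Rees statement does not directly give the containment you need. The Okounkov-body machinery of Theorem~\ref{main} also does not apply here because it is built around a valuation on a \emph{domain}, whereas at this stage $B$ is only reduced. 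The precise tool is Swanson's linear growth of primary decomposition (\cite[Theorem~3.4]{IS}): setting $I=A_1B$, it furnishes irredundant primary decompositions $I^n=\bigcap_i q_i(n)$ together with a uniform $c_0$ such that $(\sqrt{q_i(n)})^{c_0n}\subseteq q_i(n)$ for all $n$ and $i$. One then computes $I^n:_B(m_RB)^\infty=\bigcap_{m_RB\not\subseteq\sqrt{q_i(n)}}q_i(n)$, and for $c\geq c_0$ the factor $(m_RB)^{cn}$ absorbs the omitted primary components, giving $(m_RB)^{cn}\cap(I^n:_B(m_RB)^\infty)\subseteq I^n$; intersecting with $B_n$ yields $m_R^{cn}B_n\cap\widetilde{A}_n= m_R^{cn}B_n\cap A_n$. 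Once this identity is in hand, the paper packages the rest as a short exact sequence
\[
0\to \faktor{A_n}{m_R^{cn}B_n\cap A_n}\to \faktor{\widetilde{A}_n}{m_R^{cn}B_n\cap \widetilde{A}_n}\to \faktor{\widetilde{A}_n}{A_n}\to 0,
\]
which avoids the third term in your chain decomposition altogether. So: right strategy, correct auxiliary verifications, but the one step you deferred is the heart of the proof, and the specific result you need to cite is Swanson's linear growth of primary decomposition rather than uniform Artin--Rees per se.
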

\begin{proof}
 There are graded inclusions of graded $R$-algebras as follows: $$A=\bigoplus\limits_{n=0}^{\infty}A_n \hookrightarrow A^{\prime}:=\bigoplus\limits_{n=0}^{\infty} (A_n\colon_{B_n}m_R^{\infty})\hookrightarrow B=\bigoplus\limits_{n=0}^{\infty}B_n.$$ Let $I=A_1B$, the ideal generated by $A_1$ in $B$. By \cite[Theorem $3.4.$]{IS}, for all $n\geq 1$, there exist irredundant primary decompositions $$I^n = \bigcap\limits_{i=1}^t q_i(n)$$ and a positive integer $c_0$ such that $$\left(\sqrt{q_i(n)}\right)^{c_0n}\subset q_i(n)$$ for all $n\geq 1$ and $1\leq i\leq t$. Suppose that $c\geq c_0$. Since
\begin{align*}
    I^n\colon_B\, (m_RB)^{\infty} &= \left(\bigcap\limits_{i=1}^t q_i(n)\right)\colon_B\, (m_RB)^{\infty}\\
    &= \bigcap\limits_{i=1}^t\left(q_i(n)\colon_B\,(m_RB)^{\infty}\right)\\
    &= \left(\bigcap_{\substack{1\leq i\leq t\\ m_RB\not\subset \sqrt{q_i(n)}}}\left(q_i(n)\colon_B\,(m_RB)^{\infty}\right)\right)\cap \left(\bigcap_{\substack{1\leq i\leq t\\ m_RB\subset \sqrt{q_i(n)}}}\left(q_i(n)\colon_B\,(m_RB)^{\infty}\right)\right)\\
    &= \bigcap_{\substack{1\leq i\leq t\\ m_RB\not\subset \sqrt{q_i(n)}}}\left(q_i(n)\colon_B\,(m_RB)^{\infty}\right)\\
    &= \bigcap_{\substack{1\leq i\leq t\\ m_RB\not\subset \sqrt{q_i(n)}}}q_i(n)
\end{align*}
we have that 
\begin{equation}\label{lc1}
    (m_RB)^{cn}\cap \left(I^n\colon_B \,(m_RB)^{\infty}\right)
    \subset \left(\bigcap_{\substack{1\leq i\leq t\\ m_RB\not\subset \sqrt{q_i(n)}}}q_i(n)\right)\cap \left(\bigcap_{\substack{1\leq i\leq t\\ m_RB\subset \sqrt{q_i(n)}}}q_i(n)\right) = I^n
\end{equation}
for all positive integers $n$. Also
\begin{align}
    I^n\cap B_n &= \left(\bigoplus\limits_{k=0}^{\infty}A_nB_k\right)\cap B_n = A_n,\label{lc2}\\
    (m_RB)^{cn}\cap B_n &= \left(\bigoplus\limits_{k=0}^{\infty}m_R^{cn}B_k\right)\cap B_n = m_R^{cn}B_n.\label{lc3}
\end{align}
We further observe that
\begin{align}
    \left(I^n\colon_B \,(m_RB)^{\infty}\right)\cap B_n
    &= \left(\bigoplus\limits_{k=0}^{\infty}\left(A_nB_k \colon_{B_{k+n}}\, m_R^{\infty}\right)\right)\cap B_n \nonumber\\
    &= (A_n\colon_{B_n}m_R^{\infty})\label{lc4}.
\end{align}
Thus for all $n\geq 1$, and $c\geq c_0$
\begin{align*}
    m_R^{cn}B_n\cap A_n &\subset m_R^{cn}B_n\cap \left(A_n\colon_{B_n}m_R^{\infty}\right)\\
    &= m_R^{cn}B_n\cap \left[(m_RB)^{cn}\cap \left(I^n\colon_B \,(m_RB)^{\infty}\right)\right]\cap B_n\quad(\text{using $(\ref{lc3})$ and $(\ref{lc4})$})\\
    &\subset m_R^{cn}B_n\cap\left(I^n\cap B_n\right)\quad(\text{using $(\ref{lc1})$})\\
    &= m_R^{cn}B_n\cap A_n \quad(\text{using $(\ref{lc2})$}).
\end{align*}
Hence we conclude that
\begin{equation}\label{lc5}
    (m_R^{cn}B_n)\cap A_n = m_R^{cn}B_n\cap \left(A_n\colon_{B_n}m_R^{\infty}\right)
\end{equation}
for all $c\geq c_0$ and $n\geq 1$. Using $(\ref{lc5})$, for all $c\geq c_0$ and $n\geq 1$, there are short exact sequences of $R$-modules
\begin{equation}\label{lc6}
    0\to \faktor{A_n}{(m_R^{cn}B_n)\cap A_n}\to \faktor{\left(A_n\colon_{B_n}m_R^{\infty}\right)}{(m_R^{cn}B_n)\cap \left(A_n\colon_{B_n}m_R^{\infty}\right)}\to\faktor{\left(A_n\colon_{B_n}m_R^{\infty}\right)}{A_n}\to 0.
\end{equation}
From Lemma \ref{bound2}, we know that there exists a positive constant $\alpha$ such that
\begin{equation}\label{lc7}
    l_R\left(\faktor{\left(A_n\colon_{B_n}m_R^{\infty}\right)}{A_n}\right)<\alpha n^{\dim B -1}
\end{equation}
for all $n\geq 1$. Combining Lemma \ref{bound1} and Lemma \ref{dimless} give us that for a fixed $c$, there exists a positive constant $\gamma_c$ such that
\begin{equation}\label{lc8}
    l_R\left(\faktor{A_n}{(m_R^{cn}B)\cap A_n}\right)<\gamma_c n^{\dim A -1} \leq \gamma_c n^{\dim B -1} 
\end{equation}
for all $n\geq 1$. Using the short exact sequence $(\ref{lc6})$ and bounds $(\ref{lc7})$ and $(\ref{lc8})$, we obtain that
\begin{align*}
    l_R\left(\faktor{\left(A_n\colon_{B_n}m_R^{\infty}\right)}{(m_R^{cn}B_n)\cap \left(A_n\colon_{B_n}m_R^{\infty}\right)}\right) &= l_R\left(\faktor{A_n}{(m_R^{cn}B_n)\cap A_n}\right) + l_R\left(\faktor{\left(A_n\colon_{B_n}m_R^{\infty}\right)}{A_n}\right)\nonumber\\
    &< (\alpha + \gamma_c)n^{\dim B - 1}
\end{align*}
for all $c\geq c_0$ and for all positive integers $n$. Let $P$ be a minimal prime ideal of $B$. If $$\faktor{A_1}{P\cap A_1} = 0$$ then using $A_n = A_1^n$, we get that $$\faktor{A_n}{P\cap A_n} = 0$$ for all $n\geq 1$. Similarly if $$\faktor{\left(A_1\colon_{B_1}m_R^{\infty}\right)}{P\cap\left(A_1\colon_{B_1}m_R^{\infty}\right)} = 0$$ then again using $A_n = A_1^n$, we get that $$\faktor{\left(A_n\colon_{B_n}m_R^{\infty}\right)}{P\cap\left(A_n\colon_{B_n}m_R^{\infty}\right)} = 0$$ for all $n\geq 1$. Now from Theorem \ref{main2}, we can conclude that the limits
$$\lim\limits_{n\to \infty}\dfrac{l_R\left(\faktor{A_n}{(m_R^{cn}B_n)\cap A_n}\right)}{n^{\dim B-1}}\;\;\text{and}\;\;\lim\limits_{n\to \infty}\dfrac{l_R\left(\faktor{\left(A_n\colon_{B_n}m_R^{\infty}\right)}{(m_R^{cn}B_n)\cap \left(A_n\colon_{B_n}m_R^{\infty}\right)}\right)}{n^{\dim B - 1}}$$ exist. Finally from the short exact sequence $(\ref{lc6})$, we get that $$\dfrac{l_R\left(\faktor{\left(A_n\colon_{B_n}m_R^{\infty}\right)}{A_n}\right)}{n^{\dim B - 1}} = \dfrac{l_R\left(\faktor{\left(A_n\colon_{B_n}m_R^{\infty}\right)}{(m_R^{cn}B_n)\cap \left(A_n\colon_{B_n}m_R^{\infty}\right)}\right)}{n^{\dim B - 1}} - \dfrac{l_R\left(\faktor{A_n}{(m_R^{cn}B_n)\cap A_n}\right)}{n^{\dim B - 1}}$$ and by taking $n\to \infty$, we obtain the conclusions of the theorem.
\end{proof}
\begin{proposition}\label{Hilb}
Let $B = \oplus_{n\geq 0}B_n$ be a reduced standard graded algebra over a field $k$ and let $A = \oplus_{n\geq 0}$ be a standard graded $k$-subalgebra of $B$. Then $$\varepsilon(A\mid B) = \lim_{n\to \infty}\left(\dfrac{(\dim B-1)!}{n^{\dim B-1}}l_R\left(\dfrac{B_n}{A_n}\right)\right)$$ exists as a finite limit.
\end{proposition}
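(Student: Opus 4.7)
My plan is to reduce the statement to classical Hilbert polynomial theory. The critical observation is that since $R = k$ is a field, the maximal ideal $m_R$ equals $(0)$, and therefore $m_R^N = 0$ for every positive integer $N$. Consequently $(A_n \colon_{B_n} m_R^{\infty}) = B_n$ for every $n \geq 0$, so
$$\faktor{(A_n \colon_{B_n} m_R^{\infty})}{A_n} = \faktor{B_n}{A_n},$$
and $l_R(B_n/A_n) = \dim_k B_n - \dim_k A_n$. Thus the limit we wish to evaluate is literally
$$\lim_{n \to \infty} \dfrac{(\dim B - 1)!}{n^{\dim B - 1}}\bigl(\dim_k B_n - \dim_k A_n\bigr),$$
and the limsup in the definition of $\varepsilon(A\mid B)$ coincides with this expression as well.

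Next I would invoke the existence of Hilbert polynomials for standard graded algebras over a field. Since $B$ is a standard graded $k$-algebra with $B_0 = k$, the Hilbert function $n \mapsto \dim_k B_n$ eventually agrees with a polynomial $P_B(n) \in \mathbb{Q}[n]$ of degree $\dim B - 1$; likewise, $A$ is a standard graded $k$-algebra, so $\dim_k A_n$ eventually equals a polynomial $P_A(n)$ of degree $\dim A - 1$. The field $k$ is trivially universally catenary, and $B$ is reduced, so Lemma \ref{dimless} applies and yields $\dim A \leq \dim B$, whence $\deg P_A \leq \deg P_B$.

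For all $n \gg 0$ we therefore have $l_R(B_n/A_n) = P_B(n) - P_A(n)$, a polynomial in $n$ of degree at most $\dim B - 1$. Dividing by $n^{\dim B - 1}$ and letting $n \to \infty$ produces a finite limit, equal to $(\dim B - 1)!$ times the leading coefficient of $P_B - P_A$ when this difference has degree exactly $\dim B - 1$, and equal to $0$ otherwise. In contrast with the general excellent case handled by Theorem \ref{maintheorem}, no genuine obstacle arises here: the entire analytic apparatus of Theorems \ref{main} and \ref{main2}, with its appeal to Okounkov bodies, completion, and careful analysis across minimal primes with $P\cap R\neq m_R$, collapses because the local cohomology functor $H^0_{m_R}$ is the identity, and the $\varepsilon$-multiplicity reduces to a difference of two Hilbert-polynomial leading terms.
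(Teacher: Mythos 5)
Your proposal is correct and follows essentially the same route as the paper's own proof: observe that $m_R = (0)$ over a field so that $H^0_{m_R}$ is the identity functor, reduce to the difference of the Hilbert functions of $B$ and $A$, and invoke Lemma \ref{dimless} to ensure this difference is eventually a polynomial of degree at most $\dim B - 1$. The paper states these steps more tersely but the mathematical content is the same.
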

\begin{proof}
The maximal ideal $m$ of $k$ is zero since $k$ is a field. For all $n\geq 0$, we have isomorphisms $$H^0_m\left(\dfrac{B_n}{A_n}\right) \cong \dfrac{B_n}{A_n}.$$ From Lemma \ref{dimless}, we know that $\dim A \leq \dim B$. Therefore the function $$n \mapsto l_R\left(\dfrac{B_n}{A_n}\right) = l_R(B_n) - l_R(A_n)$$ is eventually a polynomial in $n$ of degree at most $\dim B - 1$, from which the proposition follows.
\end{proof}
\section{Applications}
Corollary $\ref{cor1}$ is an important special case of Theorem $3.2.$ in \cite{DC1}.
\begin{corollary}\label{cor1}
Let $R$ be an excellent reduced local ring of dimension $d>0$ with maximal ideal $m_R$ and $E$ is a submodule of a finite free $R$-module $F=R^n$. Let $B = R[F]$ be the symmetric algebra of $F$ over $R$, which is isomorphic to the standard graded polynomial ring $$B = R[x_1,\ldots,x_n] = \bigoplus\limits_{k=0}^{\infty} F^k$$ over $R$. We may identify $E$ with a submodule $E^1$ of $B_1$ and let $$A = R[E] = \bigoplus\limits_{k=0}^{\infty}E^k$$ be the graded $R$-subalgebra of $B$ generated by $E^1$ over $R$. Then $$\varepsilon\left(A\mid B\right) = \lim\limits_{k\to\infty}\dfrac{(d+n-1)!}{k^{d+n-1}}l_R\left(\faktor{(E^k\colon_{F^k}m_R^{\infty})}{E^k}\right)$$ exists as a finite limit.
\end{corollary}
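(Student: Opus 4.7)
The plan is to deduce the corollary directly from Theorem \ref{maintheorem} after checking that the specific setup satisfies its hypotheses and matching the normalizing constants. The ring $R$ is excellent by assumption, $B = R[x_1,\ldots,x_n]$ is a standard graded $R$-algebra with $B_k = F^k$, and the subalgebra $A = R[E] = \bigoplus_{k\geq 0} E^k$ is standard graded with $A_1 = E^1 \subset B_1 = F$. Since $R$ is reduced, the polynomial ring $B$ is reduced as well, so the reducedness hypothesis of Theorem \ref{maintheorem} is automatic.

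The only non-trivial hypothesis to verify is that every minimal prime $P$ of $B$ satisfies $P\cap R \neq m_R$. For a polynomial ring over a Noetherian ring, the minimal primes are exactly the extensions $\mathfrak{p}_i B$, where $\mathfrak{p}_1,\ldots,\mathfrak{p}_s$ are the minimal primes of $R$: indeed $B/\mathfrak{p}_i B \cong (R/\mathfrak{p}_i)[x_1,\ldots,x_n]$ is a domain, and since $B$ is free (hence faithfully flat) over $R$ and $R$ is reduced, $\bigcap_i \mathfrak{p}_i B = \bigl(\bigcap_i \mathfrak{p}_i\bigr) B = 0$, so these account for all minimal primes. For $P = \mathfrak{p}_i B$, one has $P\cap R = \mathfrak{p}_i$, a minimal prime of $R$; since $d = \dim R > 0$, no minimal prime of $R$ coincides with $m_R$, so $P\cap R \neq m_R$ as required.

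Finally, one computes $\dim B = \dim R + n = d + n$, because extending a Noetherian ring by a polynomial ring in $n$ variables raises the Krull dimension by $n$. Hence the normalizing exponent $\dim B - 1$ in Theorem \ref{maintheorem} becomes $d + n - 1$, matching the formula in the corollary. The identifications $A_k = E^k$, $B_k = F^k$, and $(A_k \colon_{B_k} m_R^\infty)/A_k = (E^k\colon_{F^k} m_R^\infty)/E^k$ then show that the two limits coincide, and the result follows. The main (and really only) obstacle is the identification of the minimal primes of the polynomial ring together with the verification that they contract strictly below $m_R$; once this is in hand, the rest is immediate bookkeeping.
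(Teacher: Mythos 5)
Your proposal is correct and takes essentially the same route as the paper: identify the minimal primes of the polynomial ring $B$ as the extensions of the minimal primes of $R$, observe that they contract to minimal primes of $R$ which differ from $m_R$ since $\dim R > 0$, compute $\dim B = d+n$, and apply Theorem \ref{maintheorem}. You supply slightly more detail (the domain and flatness arguments) than the paper, which simply asserts the minimal-prime identification, but the argument is the same.
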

\begin{proof}
Let $P_1,\ldots,P_t$ be the minimal primes of $R$. As $B$ is a polynomial ring over $R$, its minimal primes are $P_1B,\cdots,P_tB$. Also $\dim R>0$, so that $$P_iB \cap R = P_i \neq m_R$$ for all $i$. Moreover $\dim B = d+n$ and the corollary now follows from Theorem $\ref{maintheorem}$.
\end{proof}
Corollary $\ref{cor2}$ is an important special case of Corollary $6.3.$ in \cite{DC6}.
\begin{corollary}\label{cor2}
Suppose that $R$ is an excellent reduced local ring of dimension $d>0$ with maximal ideal $m_R$ and $I$ is an ideal in $R$. Then $$\varepsilon(I)= \lim\limits_{n\to\infty} \dfrac{d!}{n^d}l_R\left(\faktor{\left(I^n\colon_{R}\,m_R^{\infty}\right)}{I^n}\right) $$ exists as a finite limit.
\end{corollary}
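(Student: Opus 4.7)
The plan is to apply Theorem \ref{maintheorem} to a suitable graded inclusion that recovers $\varepsilon(I)$ from $\varepsilon(A \mid B)$. The natural candidate is $B = R[t]$, the polynomial ring in one variable over $R$, and $A = R[It] = \bigoplus_{n\geq 0} I^n t^n$, the Rees algebra of $I$, included in $B$ in the standard way. Under the identification $B_n = R t^n \cong R$ and $A_n = I^n t^n \cong I^n$, the quotient $(A_n \colon_{B_n} m_R^{\infty})/A_n$ becomes $(I^n \colon_R m_R^{\infty})/I^n$, so the length appearing in the corollary is exactly the length appearing in the definition of $\varepsilon(A \mid B)$.

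I would then verify the hypotheses of Theorem \ref{maintheorem}. The ring $R$ is excellent by assumption. $B = R[t]$ is a standard graded $R$-algebra generated in degree one by $t$, and it is reduced because $R$ is reduced. The subalgebra $A = R[It]$ is standard graded, generated in degree one by $It$, which is a finite set because $R$ is Noetherian. The minimal primes of $B = R[t]$ are precisely the ideals $\mathfrak{p}R[t]$ as $\mathfrak{p}$ ranges over the minimal primes of $R$; these are homogeneous and satisfy $\mathfrak{p}R[t]\cap R = \mathfrak{p}$. Since $\dim R = d > 0$, no minimal prime of $R$ can equal $m_R$, so $P\cap R \neq m_R$ for every minimal prime $P$ of $B$, as required.

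Finally, $\dim B = \dim R[t] = d+1$, so $\dim B - 1 = d$, matching the normalization $d!/n^d$ in the statement. Theorem \ref{maintheorem} then yields the existence of the limit as a finite number. There is essentially no obstacle here beyond the translation: the heavy lifting has already been done in Theorem \ref{maintheorem}. The only subtleties worth flagging are that the reducedness of $B = R[t]$ genuinely requires $R$ to be reduced (as assumed), and that the condition $d > 0$ is exactly what is needed to keep the minimal primes of $R$ distinct from $m_R$.
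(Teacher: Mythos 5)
Your proposal is correct and matches the paper's proof essentially verbatim: take $B=R[x]$, $A=R[xI]$, note that the minimal primes of $B$ are $PR[x]$ with $PR[x]\cap R=P\neq m_R$ since $\dim R>0$, observe $\dim B=d+1$, and invoke Theorem \ref{maintheorem}. Nothing to add.
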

\begin{proof}
 We let $$A:= R[xI] = \bigoplus\limits_{n\geq 0}x^nI^n \subset B:= R[x] = \bigoplus\limits_{n\geq 0}x^nR$$ where $x$ is an indeterminate. There are isomorphisms $$\dfrac{\left(I^n\colon_R\,m_R^{\infty}\right)}{I^n} \cong H^0_{m_R}\left(\dfrac{R}{I^n}\right) \cong H^0_{m_R}\left(\dfrac{B_n}{A_n}\right)$$ where $B_n = x^n R$ and $A_n = x^nI^n$. Any minimal prime ideal of $R[x]$ is of the form $PR[x]$ where $P$ is a minimal prime ideal of $R$. Let $P$ be a minimal prime ideal of $R$. Then $$PR[x]\cap R = P \neq m_R$$ as $\dim R > 0$. Moreover $\dim R[x] = d+1$ and the conclusions of the corollary now follow from Theorem \ref{maintheorem}.
\end{proof}

\bibliographystyle{plain}
\bibliography{main}
\end{document}